\documentclass[11pt]{article}

\newcommand{\ScalingMachineDescription}{%
  workstation with an AMD Ryzen AI 9 HX 370 processor (12 cores, 24 threads)
  and 124\,GiB of RAM, running TUXEDO OS%
}

\usepackage{amsmath,amssymb,amsthm,mathtools}
\usepackage{mathrsfs}
\usepackage[a4paper,margin=1in]{geometry}
\usepackage{bm}
\usepackage{multirow}   
\usepackage{booktabs}   
\usepackage{siunitx}  
\usepackage{xcolor}
\usepackage{hyperref}

\usepackage{graphicx}
\usepackage{subcaption}
\usepackage{float}

\newtheorem{definition}{Definition}
\newtheorem{theorem}{Theorem}
\newtheorem{lemma}{Lemma}
\newtheorem{remark}[theorem]{Remark}
\newtheorem{corollary}{Corollary}

\newcommand{\Th}{\mathcal{T}_h}
\newcommand{\Eh}{\mathcal{E}_h}
\newcommand{\Eho}{\mathcal{E}_h^o}
\newcommand{\Ehb}{\mathcal{E}_h^{\partial}}
\newcommand{\Ehj}{\mathcal{E}_h^{J}}

\newcommand{\wh}{\mathbf{w}_h}

\newcommand{\who}{\widehat{\mathbf{w}}_h}

\title{An HDG-type discretization of 1D embedded blood flow in complex networks with Riemann-coupled junctions}
\author{}
\date{}

\author{
Raksha Devi$^{1}$\thanks{Corresponding author: \texttt{raksha.devi@dm.unipi.it}.} \and
Luca Heltai$^{1}$ \and
Lucas O. M\"uller$^{2}$
}

\date{}

\begin{document}
\maketitle
\begingroup
\renewcommand{\thefootnote}{}
\footnotetext{\textit{E-mail addresses:}
\texttt{luca.heltai@unipi.it} (L.~Heltai),
\texttt{lucas.muller@unitn.it} (L.O.~M\"uller).}
\endgroup

\begin{center}
$^{1}$ Department of Mathematics, University of Pisa, Italy\\
$^{2}$ Laboratory of Mathematics for Biology and Medicine, Department of Mathematics, University of Trento, Trento, Italy
\end{center}

\begin{abstract}
We present a hybridizable discontinuous Galerkin (HDG) method for one-dimensional blood flow in arterial networks with Windkessel boundary conditions. The scheme uses a Harten--Lax--van Leer (HLL) numerical flux and implicit time integration. After spatial discretization, the complete network---including cell states, hybrid traces, junction conditions, and dynamic $\mathrm{RCR}$ terminal pressures---is written as a residual index-one differential--algebraic equation (DAE) and integrated with variable-step, variable-order backward differentiation formulas. The implicit formulation removes the CFL stability restriction of explicit schemes and allows variable-order, variable-step solvers. The resulting single global residual and Jacobian for the network and its terminal dynamics also provide a natural interface for monolithic coupling with external nonlinear models. We establish local solvability and a semi-discrete energy balance, and show numerically optimal convergence rates for both solution variables in the $L^2$ norm and the broken $H^1$ seminorm. The solver is validated against established benchmarks, including an aortic bifurcation and a $37$-artery network, and is further applied to a cerebral arterial network. Mass conservation is observed in a synthetically generated anatomical liver network of $8\,\mathrm{k}$ vessels, and strong-scaling studies are performed on synthetic networks of $50\,\mathrm{k}$ and $100\,\mathrm{k}$ vessels.
\end{abstract}
\section{Introduction}
One-dimensional ($1\mathrm{D}$) models of the cardiovascular system provide a computationally efficient framework for analyzing pulse-wave propagation and hemodynamic variables across complex arterial networks \cite{formaggia2003one,sherwin2003computational,blanco2025one,kim2025one,boileau2015benchmark}. By simplifying the three-dimensional Navier--Stokes equations through cross-sectional averaging, these reduced-order models retain the dominant network-scale dynamics at a small fraction of the cost of fully three-dimensional simulations. In the literature, each vessel is commonly represented as an interval with a single axial coordinate. However, anatomically realistic arteries are curved tubes with a definite orientation in space, and effects such as gravity depend on that orientation. In existing $1\mathrm{D}$ models, gravity is typically introduced through a source term involving the component of gravitational acceleration along the vessel axis \cite{MR5106706,shramko2023gravity}.

In view of possible coupling with three-dimensional elasticity models~\cite{MR1873203,heltai2021multiscale,belponer2023reduced}, we incorporate the orientation into the generic formulation while retaining a genuinely one-dimensional network discretization. Each vessel is parameterized by arc length $s$ along a centerline curve embedded in $\mathbb{R}^3$, with unit tangent $\bm{b}(\bm{x})$. The spatial flux tensor is $\mathbf F(\mathbf w,s)\otimes\bm b$, and its row-wise intrinsic divergence satisfies $\operatorname{div}_\Gamma\bigl(\mathbf F(\mathbf w,s)\otimes\bm b\bigr)=\partial_s\mathbf F(\mathbf w,s)$. Thus, the embedding is purely \emph{geometric}: the model describes one-dimensional flow along the centerlines, with $\bm b(\bm x)$ specifying the local vessel orientation. No ambient divergence or off-centerline extension of the tangent field is required; the full formulation is given in Section~\ref{sec:model-formulation}. When gravity is included, its projection onto the vessel enters the momentum source through $\bm g\!\cdot\!\bm b(\bm x)$. A spatially varying $\bm b(\bm x)$ therefore allows this known gravity-dependent source term to follow the vessel geometry accurately.

 A network model is closed by two types of coupling. At each junction one imposes conservation of mass and continuity of total pressure across the incident vessels. At each terminal we attach a lumped $0\mathrm{D}$ Windkessel model whose pressure satisfies an ordinary differential equation (ODE). Existing network formulations include characteristic, finite-volume, and discontinuous Galerkin discretizations, as well as hybrid $1\mathrm{D}$--$0\mathrm{D}$ models that couple different vessel descriptions through conservation conditions \cite{formaggia2003one,ghitti2023hybrid}, and terminal coupling is standard in these works. Implicit arterial-network solvers also predate the present formulation: Carson and van Loon~\cite{carson2017implicit} developed an enhanced trapezoidal rule method with a global pressure solve and Lagrange multipliers enforcing conservation of mass and total-pressure continuity at junctions. The present contribution combines a specific hybridizable discontinuous Galerkin (HDG) formulation with HLL-based stabilization, the coupling of multi-way junctions and dynamic Windkessel terminals through hybrid trace unknowns, and implicit time integration with adaptive step size and variable order. Terminal pressures enter the global unknowns alongside the traces, so that vessel, junction and terminal equations form a single nonlinear index-one differential–algebraic system (DAE) in residual form. The system is integrated implicitly by a variable-order, variable-step backward differentiation formula ($\mathrm{BDF}$). Each time step is solved by Newton’s method on the full nonlinear system, with no linearization or operator splitting, using one global Jacobian that couples vessels, junctions and terminals.

This monolithic structure is particularly well suited to simulating networks in which entire subnetworks, including those representing whole organs, are replaced by lumped $0\mathrm{D}$ models, even when their governing ODEs are complex and highly stiff. Incorporating such a model requires augmenting the global unknowns with its state variables and appending its governing equations and coupling conditions to the same residual and Jacobian, just as for hybrid traces and terminal pressures. The $\mathrm{RCR}$ Windkessel model used here provides a simple example of this integration; the same assembly strategy extends naturally to more elaborate organ models, while the implicit time integrator accommodates their stiffness.

Implementing this strategy requires a global set of unknowns that couples hybrid traces, junctions, and terminals within the same nonlinear system. The hybridizable discontinuous Galerkin (HDG) method provides such a set. Discontinuous Galerkin (DG) methods are widely used for hyperbolic conservation laws because they handle shocks and complex geometries and are well suited to parallel computation. With implicit time integration, however, a standard DG formulation leads to a globally coupled system involving all element degrees of freedom. With explicit time integration, by contrast, each element is updated from known neighbour states and no global solve is needed, at the price of a CFL-type step restriction.
HDG methods introduce additional ``trace'' unknowns defined only on the mesh skeleton (the traces of the unknowns on the mesh faces). Once those trace values are known, each element can be solved independently using a local element solver \cite{MR3345205}. The junction and terminal conditions involve these skeleton unknowns and can be assembled with the cell equations in a single Newton system. Although element elimination is possible when the local Newton blocks are invertible, the current solver implementation solves the full cell-trace-capacitor system.

HDG methods have been extensively studied over the last two decades, and several HDG formulations incorporating approximate
Riemann solvers such as the Harten--Lax--van Leer (HLL)
flux have been proposed for hyperbolic systems,
particularly in the context of compressible fluid
dynamics \cite{peraire2010hybridizable,vila2021hybridisable}.
For $1\mathrm{D}$ blood flow, DG methods have been used for reduced models \cite{puelz2017comparison,masri2022discontinuous} and for coupled flow and solute transport on networks \cite{masri2022discontinuous}. The novelty of the present work lies in combining the HDG--HLL network formulation and its Riemann-coupled junction and terminal treatment with a residual DAE formulation integrated by variable-order, variable-step $\mathrm{BDF}$ methods. The analytical difficulty is concentrated at the junctions: single-valued traces are sufficient for ordinary interfaces, whereas multiple connected vessels require the simultaneous imposition of mass conservation, total-pressure continuity, and characteristic compatibility across all incident vessels. The principal characteristics of the implemented solver are as follows.
\begin{enumerate}
    \item An HDG--HLL discretization of the $1\mathrm{D}$ network that incorporates junctions and $\mathrm{RCR}$ terminals into one global cell–trace–capacitor system.
    \item An implicit index-one DAE formulation in which cell and terminal-capacitor variables are differential and hybrid traces are algebraic, together with local solvability and a semi-discrete energy estimate.
    \item Observed optimal spatial rates on a single-vessel manufactured solution in the $L^2$ norm and broken $H^1$ seminorm for area and velocity.
    \item Validation against aortic bifurcation and $37$-artery benchmarks, application to a cerebral arterial network, a mass-conservation study in an anatomical liver network of $8\,\mathrm{k}$ vessels, and strong-scaling studies in synthetic networks of $50\,\mathrm{k}$ and $100\,\mathrm{k}$ vessels.
\end{enumerate}
The solver structure provides an interface for full monolithic coupling with external solvers, regardless of their nature. For example, an external model with unknowns $z$ can be appended schematically as
\begin{equation}
\mathcal R_{\mathrm{flow}}(t,y,\dot y;z)=0,\qquad
\mathcal R_{\mathrm{ext}}(t,z,\dot z;y)=0,
\qquad \mathcal C(y,z)=0,
\label{eq:future_external_coupling}
\end{equation}
so that the flow, external-model, and interface equations are assembled into the same Newton system. The DAE formulation thus provides a monolithic coupling interface.

The remainder of the paper is organized as follows. Section~\ref{sec:model-formulation} presents the governing $1\mathrm{D}$ system, tube law, and characteristic structure. Section~\ref{sec:hdg-formulation} develops the HDG--HLL discretization. Section~\ref{sec:transmission} discusses the unified treatment of transmission, boundary, and junction conditions. In Section~\ref{sec:analysis} we establish local solvability and derive the energy balance of the semi-discrete scheme. Section~\ref{sec:time_integration} discusses the implicit time-integration strategy and the associated nonlinear solvers. Finally, Section~\ref{sec:numerical_results} presents convergence results, benchmark validation, a cerebral network application, a mass-conservation study, and strong-scaling results for large vascular networks.

\section{Governing equations}\label{sec:model-formulation}
Each vessel $e$ has a fixed, oriented $C^2$ centerline
$\gamma_e:[0,L_e]\to\mathbb{R}^3$, parameterized by arc length $s$, with unit
tangent $\bm b_e(\gamma_e(s))=\gamma_e'(s)$. Positive velocity is directed toward
increasing $s$; vessel subscripts are suppressed below. For a scalar $v$ we write
$\nabla_\Gamma v=\bm b\,\partial_s v$, and for $\mathbf v\in\mathbb{R}^2$,
$\nabla_\Gamma\mathbf v=\partial_s\mathbf v\otimes\bm b\in\mathbb{R}^{2\times3}$.
We define the spatial flux tensor
\begin{equation}\label{eq:flux_tensor}
  \mathbb{F}(\mathbf w,s):=\mathbf F(\mathbf w,s)\otimes\bm b\in\mathbb{R}^{2\times3},
\end{equation}
whose $i$-th row is $F_i\bm b$, and take its intrinsic divergence row by row,
\begin{equation}\label{eq:intrinsic_div}
  \big(\operatorname{div}_\Gamma\mathbb{F}\big)_i
  =\operatorname{div}_\Gamma(F_i\bm b)
  =\bm b\cdot\partial_s(F_i\bm b)
  =\partial_s F_i ,
\end{equation}
where we use the fact that $|\bm b|=1$ implies $\bm b\cdot\partial_s\bm b=0$. The vessel-wise system is
\begin{equation}\label{equation_system}
  \partial_t\mathbf w+\operatorname{div}_\Gamma\big(\mathbf F(\mathbf w,s)\otimes\bm b\big)
  =\mathbf S(\mathbf w,s),
  \qquad
  \mathbf w=\begin{pmatrix}A\\ U\end{pmatrix},
\end{equation}
where the physical flux and source are
\begin{equation}\label{eq:flux_source}
  \mathbf F(\mathbf w,s)=
  \begin{pmatrix}
    AU\\[2pt]
    \dfrac{U^2}{2}+\dfrac{1}{\rho}P(A,s)
  \end{pmatrix},
  \qquad
  \mathbf S(\mathbf w,s)=
  \begin{pmatrix}
    0\\[2pt]
    -\dfrac{fU}{A}
  \end{pmatrix}.
\end{equation}
Here $A(s,t)$ is the cross-sectional area, $U(s,t)$ the average axial velocity,
$\rho$ the blood density, and $f=2(\zeta+2)\mu\pi/\rho$ the friction coefficient, with
$\mu$ the dynamic viscosity and $\zeta$ the velocity-profile parameter. When a
gravitational body force $\bm g$ is included, the momentum source gains the known
term $\bm g\cdot\bm b$, which follows the local vessel orientation.

At an element endpoint the outward tangent is $\bm n=\sigma\bm b$, with
$\sigma=-1$ at $s=s_{\min}$ (conventionally called ``the left
part'' of the vessel) and $\sigma=+1$ at $s=s_{\max}$ (conventionally
called ``the right part'' of the vessel), so that $\mathbb{F}\bm
n=\sigma\mathbf F$. With $(\mathbb{A},\mathbb{B})_K:=\int_K
\mathbb{A}:\mathbb{B}\,\mathrm ds$, integration by parts along the centerline
gives
\begin{equation}\label{eq:ibp}
  \big(\operatorname{div}_\Gamma\mathbb{F},\mathbf v\big)_K
  =-\big(\mathbb{F},\nabla_\Gamma\mathbf v\big)_K
  +\big\langle\mathbb{F}\bm n,\mathbf v\big\rangle_{\partial K},
  \qquad 
  \mathbb{F}:\nabla_\Gamma\mathbf v=\mathbf F\cdot\partial_s\mathbf v .
\end{equation}
The curvature term does not appear because every row of $\mathbb{F}$ is tangent
to the centerline. All integrals use arc length, and no ambient divergence or
extension of $\bm b$ is required. Junctions carry one tangent per incident
vessel. The simulations use straight vessels, but the embedded notation also
permits curved centerlines, without modeling curvature-induced secondary flow or
junction losses.

The arterial wall mechanics are governed by the tube law~\cite{matthys2007pulse}
\begin{equation}\label{eq:tube_law}
  P(A,s)-P_0=P_d+\frac{\beta(s)}{A_d(s)}\big(A^m-A_d(s)^m\big),
  \qquad
  \beta=\tfrac43\sqrt{\pi}\,E\,h_w(s),
  \qquad m=\tfrac12,
\end{equation}
where $E$ is the Young's modulus, $h_w$ the wall thickness, and $A_d(s)$ the local
diastolic area. For a tapered vessel, $A_d(s)=\pi r_d(s)^2$ with $r_d$ interpolated
linearly between the inlet radius $r_{\mathrm{in}}$ and the outlet radius
$r_{\mathrm{out}}$~\cite{alastruey2012arterial,boileau2015benchmark},
\begin{equation}\label{eq:taper}
  r_d(s)=r_{\mathrm{in}}+\frac{s-s_{\min}}{s_{\max}-s_{\min}}
  \big(r_{\mathrm{out}}-r_{\mathrm{in}}\big),
\end{equation}
where $s_{\min}$ and $s_{\max}$ are the arc-length bounds of the vessel. For a
uniform vessel, $r_{\mathrm{in}}=r_{\mathrm{out}}=r_d$ and $A_d=\pi r_d^2$ is constant.

The wave speed is
\begin{equation}\label{eq:wave_speed}
  c(A,s)=\sqrt{\frac{A}{\rho}\,\partial_A P}
  =\sqrt{\frac{\beta}{2\rho A_d}}\,A^{1/4},
\end{equation}
and, at fixed $s$, the flux Jacobian is
\begin{equation}\label{eq:jacobian}
  \frac{\partial\mathbf F}{\partial\mathbf w}=
  \begin{pmatrix}
    U & A\\
    c^2/A & U
  \end{pmatrix},
\end{equation}
with eigenvalues $\lambda_{1,2}=U\pm c$. Under physiological conditions
$|U|<c$, the system~\eqref{equation_system} is strictly hyperbolic. For locally constant
coefficients it has the Riemann invariants $W_{1,2}=U\pm4(c-c_0)$ with
$c_0=c(A_d,s)$; for spatially varying coefficients these are evaluated locally at
the endpoint where they are used.
\section{Hybridizable Discontinuous Galerkin Framework}\label{sec:hdg-formulation}
Let $\Th$ be the collection of non-overlapping line elements $K$, and let $\Eh$ be the set of all boundary endpoints (nodes), whose elements we call faces and denote by
$F$. We split $\Eh$ into three mutually disjoint subsets:
\[
  \Eh = \Eho \cup \Ehb \cup \Ehj,
\]
representing standard interior nodes ($\Eho$), exterior boundary terminals ($\Ehb$), and branching junctions ($\Ehj$). Following the standard HDG construction~\cite{MR3345205,peraire2010hybridizable}, we introduce a local element space $\mathbf{W}_h^k$ and the trace space $\mathbf{M}_h^k$
\begin{align*}
  \mathbf{W}_h^k &= \left\{ \mathbf{v} \in (L^2(\Th))^2 : \mathbf{v}|_K \in (\mathcal{P}^k(K))^2, \, \forall K \in \Th \right\}, \\
  \mathbf{M}_h^k &= \left\{ \bm{\mu} \in (L^2(\Eh))^2 : \bm{\mu}|_F \in (\mathcal{P}^k(F))^2, \, \forall F \in \Eh \right\}.
\end{align*}
We seek $(\mathbf w_h,\who)\in\mathbf W_h^k\times\mathbf M_h^k$ with
$\who=(\widehat A_h,\widehat U_h)^\top$. 

Since the mesh is one-dimensional, every face $F \in \Eh $ is a single point with arc-length coordinate $s_F$, and $\mathcal{P}^k(F)^2 = \mathbb R^2$. The trace space $\mathbf{M}_h^k$ is therefore independent of the polynomial degree $k$. On each face the trace is a single pair of values $\who (F)=(\widehat A_h,\widehat U_h)(F) \in \mathbb{R}^2$. We discuss the trace values at interior faces, boundary faces, and junctions in detail in Section~\ref{sec:transmission}.

Testing \eqref{equation_system} on each element, integrating by parts, and replacing the boundary flux by a numerical flux gives the local problem
\begin{equation}\label{eq:hdg_local}
  \left(\partial_t\mathbf w_h,\mathbf v\right)_K
  -\left(\mathbb{F}(\mathbf w_h,s),\nabla_\Gamma\mathbf v\right)_K
  +\left\langle\widehat{\mathbb{F}}_h\bm n,\mathbf v\right\rangle_{\partial K}
  =\left(\mathbf S(\mathbf w_h,s),\mathbf v\right)_K,
  \quad\forall\,\mathbf v\in(\mathcal P^k(K))^2,
\end{equation}
where $\bm n = \sigma \bm b$ is the outward tangent at an endpoint of $K$, with $\sigma=-1$ on the left and $\sigma=+1$ on the right, and
$\langle\cdot,\cdot\rangle_{\partial K}$ denotes the sum of point evaluations
at the two endpoints. By~\eqref{eq:ibp}, the volume term equals
$\left(\mathbf F(\mathbf w_h,s),\partial_s\mathbf v\right)_K$. The term
$\widehat{\mathbb{F}}_h\bm n$ is the two-component numerical approximation of
$\mathbb{F}\bm n=\sigma\mathbf F$, and we write it as
$\widehat{\mathbf F}_h\cdot\bm n$ in what follows. The interior state on each
element is coupled to its traces, while the traces are fixed globally by the
transmission, boundary, and junction conditions of
Section~\ref{sec:transmission}.
\begin{figure}[htbp]
  \centering
  \includegraphics[width=\textwidth]{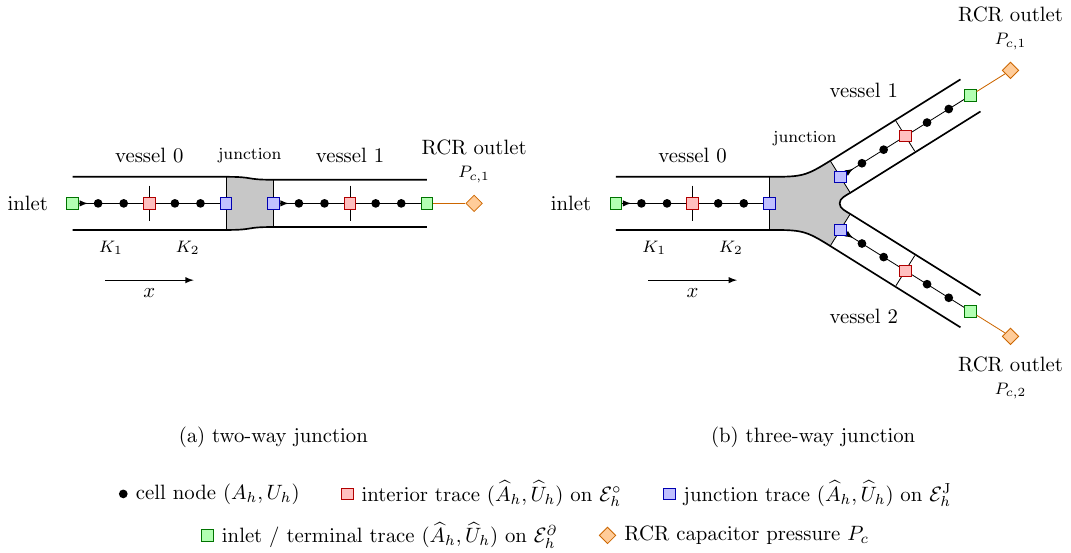}
  \caption{HDG DoF at $2$-way junctions and a vessel bifurcation with polynomial degree ($p=1$).}
  \label{fig:bifurcation_dofs}
\end{figure}
For the configuration in Figure~\ref{fig:bifurcation_dofs}(b), we have $3$ vessels, each with $N_{\mathrm{el}}=2$ elements and $p=1$. Each element carries $2(p+1)=4$ unknowns. That gives $3\,N_{\mathrm{el}}\cdot2(p+1)= 24$ cell unknowns (black dots). There are $3$ trace pairs at the interior nodes, $3$ at the inlet and outlets, and $3$ at the junction. These give $18$ trace unknowns. The two $\mathrm{RCR}$ outlets add $2$ capacitor pressures, so the system has $24+18+2=44$ DoFs in total. The DoFs for the $2$-way junction in Figure~\ref{fig:bifurcation_dofs}(a) are counted in the same way.

\subsection{Numerical trace flux}\label{sec:num-flux}
The numerical trace flux is a two-state function of the interior (self) state
$\wh$ and the trace state. On any face, it receives the interior element state
$\mathbf w_h=(A_h,U_h)^\top$ (the \emph{self} state, evaluated on the face
from inside $K$) and the single-valued trace
$\widehat{\mathbf w}_h=(\widehat A_h,\widehat U_h)^\top$ (the
\emph{exterior} state living on the face), and
returns a single normal flux,
\begin{equation}
  \widehat{\mathbf F}_h\cdot\bm n
  \;=\;
  \mathcal F_{\mathrm{num}}\bigl(\,b\!\cdot\!\bm n,\;\mathbf w_h,\;
  \widehat{\mathbf w}_h\,\bigr).
  \label{eq:numflux_two_state}
\end{equation}
We use $\mathcal F_{\mathrm{num}}$ to denote a standard numerical flux,
such as Lax--Friedrichs, Rusanov, Roe, or HLL \cite{toro2013riemann}. In HDG form, these fluxes can
be written as
\begin{equation}
  \widehat{\mathbf F}_h\cdot\bm n
  \;=\;
  \sigma \mathbf F(\widehat{\mathbf w}_h, s_F)
  \;+\;
 \bm\Sigma(\mathbf w_h,\widehat{\mathbf w}_h)\,
  \bigl(\mathbf w_h-\widehat{\mathbf w}_h\bigr)
  \label{eq:hdg_template}
\end{equation}
The first term is the consistent part, with the physical flux evaluated at the trace. The second is the stabilisation (dissipation) part. The matrix
$\bm\Sigma$ penalizes the interior--trace mismatch, controls the amount of
numerical dissipation, and contributes to the stability of the hybridized
formulation.

The simplest closure is the scalar trace anchored penalty
$\bm\Sigma_{\mathrm{s}}=\alpha_{\mathrm{s}}\,\mathbf I$ with
$$\alpha_{\mathrm{s}}=\max\bigl(|U_h\pm c_h|,\,|\widehat U_h\pm\widehat
c_h|\bigr),$$ which applies the same dissipation to both characteristic
directions. A second possibility is the two-state local Lax--Friedrichs (Rusanov) flux,
which in the form~\eqref{eq:hdg_template} corresponds to
$\bm\Sigma=\tfrac12(\alpha\mathbf I+\sigma\bar{\mathbf J})$, with
$\bar{\mathbf J}$ the path-averaged flux Jacobian, which is defined as
\begin{equation}
  \bar{\mathbf J}=\int_0^1\partial_{\mathbf w}\mathbf F\bigl(\widehat{\mathbf w}_h
  +\theta(\mathbf w_h-\widehat{\mathbf w}_h),s_F\bigr)\,\mathrm d\theta .
  \label{eq:path_jacobian}
\end{equation}

The HLL flux instead uses separate estimates of the left- and right-running wave speeds. It therefore distinguishes the two characteristic directions and provides wave-dependent dissipation.

\textit{From the HDG stabilisation to the HLL flux.}
In the HDG context, the self (interior) state is the
left state and the trace is the right state,
\begin{equation}
  \mathbf w_L \equiv \mathbf w_h = (A_h,U_h)^\top,
  \qquad
  \mathbf w_R \equiv \widehat{\mathbf w}_h = (\widehat A_h,\widehat U_h)^\top .
\end{equation}
The orientation-projected physical fluxes are
\begin{equation}
  \mathbf F_\star\cdot\bm n =
\sigma \left(
  A_\star U_\star\,,  \,
  \tfrac12 U_\star^2 + P(A_\star, s_F)/\rho
  \right)^{\top}
  \qquad \star\in\{L,R\}.
  \label{eq:proj_flux}
\end{equation}
The normal flux Jacobian $\sigma\,\partial_{\mathbf w}\mathbf F$ has the
characteristic speeds $\sigma U\pm c$. The signal speeds are estimated from
the averaged states,
\begin{equation}
  \bar U=\tfrac12(U_L+U_R),
  \quad
  \bar c=\tfrac12\bigl(c(A_L)+c(A_R)\bigr),
  \qquad
  s_L=\sigma \bar U-\bar c,
  \quad
  s_R= \sigma \bar U+\bar c .
  \label{eq:wave_speeds}
\end{equation}
The HLL numerical flux is the three-branch expression \cite[Chapter~10]{toro2013riemann}
\begin{equation} \label{eq:hll_flux}
  \widehat{\mathbf F}_h\cdot\bm n
  =
  \begin{cases}
  \mathbf F_R\cdot\bm n, & s_R\le 0,\\[6pt]
  \dfrac{s_R\,\mathbf F_L\cdot\bm n - s_L\,\mathbf F_R\cdot\bm n
         + s_R s_L\,(\mathbf w_R-\mathbf w_L)}{s_R-s_L}, 
  & s_L<0<s_R, \\[6pt]
  \mathbf F_L\cdot\bm n, & s_L\ge 0.
  \end{cases}
\end{equation}
In the subcritical physiological regime $|U|<c$, the central branch is active, and the
dissipative jump $s_Rs_L(\mathbf w_R-\mathbf w_L)$ acts on the
\emph{raw} states.

\textit{Equivalence with the HDG normal flux.}
The central branch of~\eqref{eq:hll_flux} can
be decomposed into a consistent part and a \emph{scalar} dissipation acting on
the interior--trace jump. Splitting the
numerator gives
\begin{equation}
  \widehat{\mathbf F}_h\cdot\bm n
  =\underbrace{\frac{\sigma\bigl(s_R\,\mathbf F(\mathbf w_h,s_F)
            -s_L\,\mathbf F(\who,s_F)\bigr)}{s_R-s_L}}
            _{\text{consistent (wave-weighted) flux}}
  +\underbrace{\delta_{\mathrm{HLL}}\,
  \bigl(\mathbf w_h-\who\bigr)}_{\text{dissipation}},
  \qquad
  \delta_{\mathrm{HLL}}=-\frac{s_L\,s_R}{s_R-s_L}>0,
  \label{eq:hll_consistent}
\end{equation}
where positivity of $\delta_{\mathrm{HLL}}$ follows from $s_L<0<s_R$. Rewriting~\eqref{eq:hll_consistent} in the form~\eqref{eq:hdg_template} gives
the matrix-valued coefficient
\begin{equation}
  \bm\Sigma_{{\mathrm{HLL}}}=\delta_{\mathrm{HLL}}\mathbf I
  +\frac{\sigma\,s_R}{s_R-s_L} \bar{\mathbf J},
  \label{eq:hll_trace_anchored}
\end{equation}
which also contains the averaged flux Jacobian of the two states.

\begin{remark}[HLL $\to$ Rusanov]
With a single symmetric speed $s_R=-s_L=\alpha_{\mathrm s}$, one has
$\delta_{\mathrm{HLL}}=\alpha_{\mathrm s}/2$ and $s_R/(s_R-s_L)=\tfrac12$, so
$\bm\Sigma_{\mathrm{HLL}}$ in~\eqref{eq:hll_trace_anchored} reduces to the
Rusanov coefficient $\tfrac12(\alpha_{\mathrm s}\mathbf I+\sigma\bar{\mathbf J})$.
HLL is therefore the two-wave generalisation of Rusanov, weighting the
forward ($U+c$) and backward ($U-c$) characteristics separately.
\end{remark}

\section{Transmission, boundary and junction conditions}\label{sec:transmission}
The traces are fixed globally by transmission, boundary and junction conditions. At a junction node the trace space carries one trace pair per incident vessel, and at every other node it carries a single pair. We seek
$\widehat{\mathbf w}_h\in\mathbf M_h^k$ such that
\begin{equation}
  \sum_{K\in\Th}
  \bigl\langle\widehat{\mathbf F}_h\cdot\bm n,\bm\mu\bigr\rangle
    _{\partial K\setminus(\Ehb\cup\Ehj)}
  +\sum_{F\in\Ehb}\bigl\langle\widehat{\mathbf B}_h,\bm\mu\bigr\rangle_F
  +\sum_{J\in\Ehj}\bigl\langle\widehat{\mathbf J}_h,\bm\mu\bigr\rangle_J
  =0,
  \qquad\forall\bm\mu\in\mathbf M_h^k .
  \label{eq:global_hdg}
\end{equation}
The interior terms use the same HLL flux \eqref{eq:hll_flux} as the local problem \eqref{eq:hdg_local}, so the element state and the traces are coupled consistently. The operators
$\widehat{\mathbf B}_h$ and $\widehat{\mathbf J}_h$
encode the boundary and junction physics defined next. All tube-law
quantities at a face $F$ are evaluated at its arc-length position $s_F$ in the
incident vessel: $P(\widehat A)=P(\widehat A,s_F)$,
$\widehat c=c(\widehat A,s_F)$, and $c_0=c(A_d(s_F),s_F)$.

\subsection{Interior faces}\label{sec:interior}
Consider a face shared by $K_L$ and $K_R$ of the same vessel, so that $\sigma_L =+1$ and $\sigma_R =-1$.
The residual is the sum of the two one-sided HLL fluxes, each taking that side's interior as the self state and the shared trace as the exterior state:
\begin{equation}
  \mathcal F_{\mathrm{int}}
  =
  \widehat{\mathbf F}_h\bigl(\sigma_L,\,\mathbf w_L,\,
  \widehat{\mathbf w}_h\bigr)
  +
  \widehat{\mathbf F}_h\bigl(\sigma_R,\,\mathbf w_R,\,
  \widehat{\mathbf w}_h\bigr)
  =\mathbf 0 .
  \label{eq:interior_hll}
\end{equation}
These are two scalar equations (area and momentum rows) fixing the two trace
unknowns $(\widehat A_h,\widehat U_h)$. 
\subsection{Physical boundaries: \texorpdfstring{$\widehat{\mathbf{B}}_h$}{B-hat}}\label{sec:boundaries}
At a subcritical boundary face exactly one characteristic leaves the vessel,
so exactly one physical condition can be imposed. The operator $\widehat{\mathbf B}_h$
combines it with one outgoing Riemann-invariant compatibility condition,
using $W_{1,2}=U+4\sigma(c-c_0)$, where $\sigma$ is the outward
orientation of the face. Inlets are placed at $s=0$ ($\sigma=-1$) and outlets
at $s=L_e$ ($\sigma=+1$) of their vessels.

\textit{Inflow.} A prescribed flow rate $Q_{\mathrm{in}}(t)$ is combined with the outgoing backward invariant:
\begin{equation}\label{eq:bc_inflow}
  \widehat{\mathbf B}_h^{\mathrm{in}}
  =
  \begin{pmatrix}
  \widehat A_h\,\widehat U_h - Q_{\mathrm{in}}(t)\\[6pt]
  \bigl[\widehat U_h-4(\widehat c_h-c_0)\bigr]
  -\bigl[U_h-4(c_h-c_0)\bigr]
  \end{pmatrix}
  =\mathbf 0 .
\end{equation}

\textit{Windkessel/RCR outlet.} Pressure continuity with the three-element Windkessel is combined with the outgoing forward invariant:
\begin{equation}\label{eq:bc_rcr}
  \widehat{\mathbf B}_h^{\mathrm{rcr}}
  =
  \begin{pmatrix}
  P(\widehat A_h) - \bigl(R_1\,\widehat A_h\widehat U_h + P_c\bigr)\\[6pt]
  \bigl[\widehat U_h+4(\widehat c_h-c_0)\bigr]
  -\bigl[U_h+4(c_h-c_0)\bigr]
  \end{pmatrix}
  =\mathbf 0 ,
\end{equation}
where the capacitor pressure $P_c$ is a global unknown of the DAE. For a
capacitance $C>0$, it evolves according to
\[
C\dot P_c = \widehat A_h\widehat U_h - \frac{P_c-P_{\mathrm{out}}}{R_2},
\]
which we write as the residual
\begin{equation}
R_{pc} = C \dot P_c  -
\widehat A_h\widehat U_h
+
\frac{P_c-P_{\mathrm{out}}}{R_2}.
\label{eq:rcr_capacitor}
\end{equation}
Because $P_c$ couples to the trace through \eqref{eq:bc_rcr}-\eqref{eq:rcr_capacitor},
the terminal contributes to the same Jacobian as the vessel variables. The
corresponding blocks $D$, $C$, and $E$ are defined in
Section~\ref{sec:time_integration}.

\textit{Resistive outlet.} When $C=0$, the RCR model reduces to a single resistance. The capacitor equation becomes algebraic,
$P_c=P_{\mathrm{out}}+R_2\,\widehat A_h\widehat U_h$, and the first row
of~\eqref{eq:bc_rcr} reduces to
\begin{equation}\label{eq:bc_resistive}
  P(\widehat A_h)-P_{\mathrm{out}}-R\,\widehat A_h\widehat U_h=0,
  \qquad R=R_1+R_2,
\end{equation}
with the same compatibility row and no capacitor unknown. The $37$-artery
benchmark uses this outlet with $R_1=0$ and $R=R_2$.
In all cases, the local cell equation on the face still uses the HLL
flux. At a boundary, the trace value is prescribed by
$\widehat{\mathbf B}_h$; at an interior interface, conservation
determines the trace.

\subsection{Junctions: \texorpdfstring{$\widehat{\mathbf J}_h$}{J-hat}}\label{sec:junctions}
Consider a junction $J$ connecting $K\ge 2$ vessels. A node shared by two
elements of the same vessel, with continuous tube-law coefficients, is treated
as an interior face. A node shared by elements of two different vessels is a
two-way junction ($K=2$) and is closed by the junction conditions below.

Let
\[
\widehat{\mathbf{w}}_{h,i}
=
\begin{pmatrix}
\widehat{A}_i \\ \widehat{U}_i
\end{pmatrix},
\qquad i = 1,\ldots,K,
\]
denote the hybrid trace state of vessel $i$ at the junction, with
orientation sign
\[
s_i =
\begin{cases}
+1, & \text{if vessel } i \text{ ends at the junction},\\
-1, & \text{if vessel } i \text{ starts at the junction},
\end{cases}
\]
so that $s_i$ equals the outward orientation $\sigma$ of the incident element
at $J$. Following the standard $1\mathrm D$ junction closure
\cite{formaggia2003one,muller2016consistent}, mass conservation and
continuity of the total energy head $\mathcal H(\widehat{\mathbf{w}}_{h,i})=\tfrac12\widehat U_i^2+\Psi(\widehat A_i)$,
$\Psi=P/\rho$, give the $K$ constraints
\begin{equation}
\mathcal{F}_J
 (\widehat{\mathbf{w}}_{h,1},\ldots,\widehat{\mathbf{w}}_{h,K})
=
\begin{pmatrix}
\displaystyle\sum_{i=1}^{K}
s_i\,\widehat{A}_i\,\widehat{U}_i
\\[10pt]
\mathcal{H}(\widehat{\mathbf{w}}_{h,1})
- \mathcal{H}(\widehat{\mathbf{w}}_{h,2})
\\[4pt]
\vdots
\\[4pt]
\mathcal{H}(\widehat{\mathbf{w}}_{h,1})
- \mathcal{H}(\widehat{\mathbf{w}}_{h,K})
\end{pmatrix}
= \mathbf{0},
\qquad
\mathcal{H}(\widehat{\mathbf{w}}_{h,i})
=
\tfrac{1}{2}\widehat{U}_i^2 + \Psi(\widehat{A}_i),
\label{eq:junction_transmission}
\end{equation}
where $\Psi(A) = P(A)/\rho$. Row $1$ enforces global mass
conservation; rows $2$ to $K$ enforce continuity of total
head between all vessels and the reference vessel $i=1$.  The remaining $K$ equations impose outgoing characteristic compatibility on each branch.

\begin{remark}[Single-valued vs.\ multi-valued traces]
On interior faces and at physical boundaries the trace is single-valued, as in
standard HDG. At vessel junctions, each incident face carries its own trace
because the cross-sectional area may be discontinuous between branches. The
branch traces are coupled through conservation of mass, continuity of total
pressure, and outgoing characteristic compatibility.
\end{remark}
\begin{remark}[Volume balance]\label{rem:volume}
The first row of the junction conditions~\eqref{eq:junction_transmission}
conserves the trace flow at every junction,
$\sum_{i} s_i\,\widehat A_i\widehat U_i=0$. The network volume
\[
  V(t)=\int_{\Th}A_h\,\mathrm ds
\]
is therefore expected to satisfy
\begin{equation}
  \frac{\mathrm dV}{\mathrm dt}
  = Q_{\mathrm{in}}(t)-\sum_{k}\widehat Q_{\mathrm{out},k}(t)
  \label{eq:volume_balance}
\end{equation}
up to discretization error, where
$\widehat Q_{\mathrm{out},k}=\widehat A\widehat U$ is the trace flow at
terminal $k$. This balance is verified numerically in
Section~\ref{sec:liver}.
\end{remark}

\section{Solvability and energy balance}\label{sec:analysis}
\textit{Notation.}
Throughout this section $\mathbf w=(A,U)^\top$ with $A>0$, $Q=AU$
for the flow rate and $\widehat Q:=\widehat A\,\widehat U$ for its trace
counterpart. We set $\Psi(A):=P(A)/\rho$, $\sigma_{K,F}$ for the outward orientation of the face $F$ of $K$, and abbreviate
$\int_{\Th}(\cdot):=\sum_{K\in\Th}\int_K(\cdot)$. The admissible set is
\begin{equation}\label{eq:admissible}
  \mathcal U=\bigl\{(\wh,\who):\ A_h>\tfrac12A_{\min},\ \widehat A_h>\tfrac12A_{\min},\
  |U_h|<c_h,\ |\widehat U_h|<\widehat c_h\ \bigr\},
\end{equation}
the discrete states with strictly positive area and strictly
subcritical flow. Positive area keeps the tube law and wave speed differentiable, while subcriticality makes the entropy of Section~\ref{sec:entropy} convex.

\subsection{Conditional solvability}\label{sec:solvability}
For any $(\wh,\who)\in\mathbf W_h^k\times\mathbf M_h^k$ the semi-discrete HDG
scheme reads
\begin{subequations}
\begin{align}
\left(\partial_t\wh,\mathbf v\right)_K
- \left(\mathbf F(\wh),\bm b\cdot\nabla\mathbf v\right)_K
+ \left\langle\widehat{\mathbf F}_h\cdot\bm n,\mathbf v\right\rangle_{\partial K}
&= \left(\mathbf S(\wh),\mathbf v\right)_K,
\quad \forall\,\mathbf v\in(\mathcal P^k(K))^2,
\label{eq:semidiscrete1}\\
\sum_{K\in\Th}
\left\langle\widehat{\mathbf F}_h\cdot\bm n,\bm\mu\right\rangle
  _{\partial K\setminus(\Ehb\cup\Ehj)}
&+\sum_{F\in\Ehb}\left\langle\widehat{\mathbf B}_h,\bm\mu\right\rangle_F
\nonumber\\
&+\sum_{J\in\Ehj}\left\langle\widehat{\mathbf J}_h,\bm\mu\right\rangle_J=0,
\qquad \forall\,\bm\mu\in\mathbf M_h^k .
\label{eq:semidiscrete2}
\end{align}
\end{subequations}
Collecting the terminal
pressures into the differential block through $z=(W,P_c)$, the pair
\eqref{eq:semidiscrete1}--\eqref{eq:semidiscrete2} becomes a
differential--algebraic system,
\begin{equation}\label{dae_form}
\begin{aligned}
    M_z\,\dot z &= R(t,z,\widehat W),\qquad z=(W,P_c),\\
    0 &= G(t,z,\widehat W),
\end{aligned}
\end{equation}
in which $M_z=\mathrm{blockdiag}\bigl(M,\mathrm{diag}(C_i)\bigr)$ is positive
definite for $C_i>0$. The cell rows of $R$
contain the volume, flux and source terms, its terminal rows are
$\widehat Q_i-(P_{c,i}-P_{\mathrm{out},i})/R_{2,i}$, and the map $G$ collects the
transmission, boundary and junction constraints. Purely resistive terminals enter through algebraic boundary conditions.

\begin{lemma}(Residual regularity)\label{lem:reg}
The tube law~\eqref{eq:tube_law} with fixed positive wall and reference-area
coefficients is of class $C^2$ in area and satisfies $P'(A)>0$. For $C^1$
prescribed data, the maps $R$ and $G$ are of class
$C^1$ on the admissible set~\eqref{eq:admissible}, extended by arbitrary
capacitor pressures. In particular they are locally Lipschitz in the state.
\end{lemma}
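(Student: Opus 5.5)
The plan is to verify regularity term by term, building up from the tube law to the full residuals. Throughout, $R$ and $G$ are finite compositions of smooth operations with a few potentially nonsmooth ingredients, namely $A^{1/2}$, $A^{1/4}$, the HLL branch selection, and $1/A$ in the friction term. Each of these will be controlled on the admissible set~\eqref{eq:admissible}.

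\textbf{Tube law and wave speed.} With $m=\tfrac12$, \eqref{eq:tube_law} gives $P(A,s)=P_0+P_d+\beta(s)A_d(s)^{-1}(A^{1/2}-A_d^{1/2})$. This map is $C^\infty$ in $A$ on $A>0$, so in particular $C^2$. Its derivative is
\[
P'(A)=\tfrac{\beta}{2A_d}A^{-1/2}>0,
\]
since $\beta=\tfrac43\sqrt\pi Eh_w>0$ and $A_d>0$. The wave speed \eqref{eq:wave_speed}, $c=\sqrt{\beta/(2\rho A_d)}\,A^{1/4}$, is likewise $C^\infty$ in $A$ on $A>0$, and it satisfies $c\ge c_{\min}>0$ on $A>\tfrac12A_{\min}$. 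It follows that $\mathbf F$, $\Psi$, $\mathcal H$, the Riemann invariants $U\pm4(c-c_0)$, and the source $\mathbf S=(0,-fU/A)$ are all $C^\infty$ in $\mathbf w$ on $\{A>\tfrac12A_{\min}\}$.

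\textbf{Numerical flux.} This step is the main obstacle, because the HLL flux \eqref{eq:hll_flux} is only piecewise defined and is not $C^1$ across the switching surfaces $s_L=0$ or $s_R=0$. I would show that on $\mathcal U$ only the central branch is active. From $|U_h|<c_h$ and $|\widehat U_h|<\widehat c_h$, averaging gives $|\bar U|<\bar c$. Hence $s_L=\sigma\bar U-\bar c<0<s_R$ and $s_R-s_L=2\bar c>0$. The central formula \eqref{eq:hll_consistent} is then a rational function of smooth quantities with a denominator bounded away from zero, so it is $C^\infty$ on the open set $\mathcal U$. Because $\mathcal U$ is open and the branch is fixed throughout it, no switching surface is crossed.

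\textbf{Assembly of $R$ and $G$.} The cell rows of $R$ contain $M^{-1}$-independent volume integrals of $\mathbf F(\wh)\cdot\partial_s\mathbf v$ and $\mathbf S(\wh)\cdot\mathbf v$. These integrals are computed either exactly or by a fixed quadrature of smooth integrands over compact $K$, with $\wh$ depending linearly on its coefficients and staying in the admissible region. Differentiation under the integral sign therefore yields $C^1$ (indeed $C^\infty$) dependence. The face contributions are point evaluations of the smooth flux. The terminal rows $\widehat Q_i-(P_{c,i}-P_{\mathrm{out},i})/R_{2,i}$ are polynomial in the state, with $C^1$ dependence on $t$ through the data. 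The rows of $G$ come from \eqref{eq:interior_hll}, \eqref{eq:bc_inflow}, \eqref{eq:bc_rcr}, \eqref{eq:bc_resistive} and \eqref{eq:junction_transmission}, and each is a smooth combination of $\widehat A\widehat U$, $P$, $\mathcal H$, $c$ and the HLL flux. The explicit time dependence enters only through $Q_{\mathrm{in}}(t)$ and $P_{\mathrm{out}}(t)$, which are $C^1$ by assumption. Since $P_c$ appears affinely, it may range over all of $\mathbb R$.

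\textbf{Conclusion.} Combining the above, $R$ and $G$ are $C^1$ on the open set $\mathcal U\times\mathbb R^{n_c}$. Local Lipschitz continuity then follows from the mean value inequality applied on convex neighbourhoods (small balls) contained in this open set, where the Jacobian is bounded by continuity.
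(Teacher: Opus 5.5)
Your proposal is correct and follows essentially the same route as the paper. The paper's steps are:
\begin{itemize}
\item smoothness of the tube law and wave speed on $A>\tfrac12A_{\min}$;
\item subcriticality forcing the central HLL branch, with denominator $s_R-s_L=2\bar c>0$;
\item $R$ and $G$ as finite sums of integrals and point evaluations of these $C^1$ functions against fixed basis functions, which gives local Lipschitz continuity.
\end{itemize}
The differences are only in detail. You use the explicit $m=\tfrac12$ form to get $C^\infty$ regularity, and you spell out the bound $|\bar U|\le\tfrac12(|U_L|+|U_R|)<\bar c$. The paper instead argues for a general $C^2$ tube law, noting that $c\in C^1$ needs the second derivative of $P$, and simply asserts the branch selection.
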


\begin{proof}
The set $\mathcal U$ is open, since the inequalities are strict and
the discrete fields depend continuously on the coefficient vectors
$(W,\widehat W)$. On $\mathcal{U}$ the area is bounded below by $\frac{1}{2} A_{\mathrm{min}}>0$. Since the pressure law is $C^2$ in area with $P'(A)>0$, the wave speed $c= \sqrt{A\,P'/\rho}$ is $C^1$ in area. The extra derivative of the pressure law is needed because the wave speed depends on its first derivative. Subcriticality provides $s_L<0<s_R$, so only the central branch of \eqref{eq:hll_flux} is active and the denominator $s_R-s_L = 2\bar{c}$ is positive. Hence
$\widehat{\mathbf F}_h$ is a $C^1$ function of $(\wh,\who)$, and the same
reasoning applies to the boundary operator $\widehat{\mathbf B}_h$ and
the junction operator $\widehat{\mathbf J}_h$.

The maps $R$ and $G$ are finite sums of integrals of these functions against fixed basis functions, and are therefore $C^1$. A $C^1$ map is Lipschitz on every compact subset of $\mathcal{U}$.
\end{proof}

The local solvability result assumes an invertible trace Jacobian. Remark~\ref{rem:scope} describes its boundary, junction and interior blocks.

\begin{theorem}(Conditional local existence and uniqueness).
\label{thm:existence_uniqueness}
Assume that
\begin{itemize}
  \item[(i)] the pressure law $P(\cdot, s):(0,\infty)\to\mathbb R$ is of class $C^2$ in area with
    $P'(A)>0$, the prescribed data are $C^1$, and $C_i,R_{2,i}>0$ at the capacitive terminals;
  \item[(ii)] the initial approximation is admissible, $A_h(\cdot,0)\ge A_{\min}>0$
    and $|U_h(\cdot,0)|<c_h(\cdot,0)$, and admits a consistent trace
    $\widehat W_0$ with $G(0,z_0,\widehat W_0)=0$;
  \item[(iii)] the trace Jacobian
    $D_0:=\partial_{\widehat W}G(0,z_0,\widehat W_0)$ is nonsingular. 
\end{itemize}
Then there exists $T^*>0$ such that the DAE~\eqref{dae_form} possesses a unique
solution $(z,\widehat W)\in C^1([0,T^*])$ that remains admissible. Equivalently,
\eqref{eq:semidiscrete1}--\eqref{eq:semidiscrete2} admit a unique local solution
$(\wh,\who)$ in $\mathcal U$.
\end{theorem}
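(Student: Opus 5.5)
The plan is to reduce the index-one DAE~\eqref{dae_form} to an ordinary differential equation on a neighbourhood of the consistent initial point $(z_0,\widehat W_0)$ via the implicit function theorem, and then apply the Picard--Lindel\"of theorem. By Lemma~\ref{lem:reg} and assumption~(i), the maps $R$ and $G$ are $C^1$ on the open set $\mathcal U\times\mathbb R^{n_c}$ (with capacitor pressures arbitrary), and $t\mapsto(R,G)$ is $C^1$ because the data are $C^1$. Assumption~(ii) gives $(z_0,\widehat W_0)\in\mathcal U$. There is a gap to close first: (ii) only asserts admissibility of the initial cell state, with $A_h\ge A_{\min}>\tfrac12A_{\min}$. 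We must also check, or add to the hypothesis of a consistent trace, that $\widehat W_0$ is admissible. I would read (ii) as requiring this, since otherwise $G$ is not even differentiable at the initial point.

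\emph{Step 1 (elimination of the traces).} Since $G(0,z_0,\widehat W_0)=0$ and $D_0=\partial_{\widehat W}G(0,z_0,\widehat W_0)$ is nonsingular by (iii), the implicit function theorem yields neighbourhoods $I_0\times\mathcal N_z$ of $(0,z_0)$ and $\mathcal N_{\widehat W}$ of $\widehat W_0$. On these there is a unique $C^1$ map $\phi:I_0\times\mathcal N_z\to\mathcal N_{\widehat W}$ such that, for $(t,z,\widehat W)\in I_0\times\mathcal N_z\times\mathcal N_{\widehat W}$, we have $G(t,z,\widehat W)=0$ if and only if $\widehat W=\phi(t,z)$. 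We may shrink these neighbourhoods so that their product is a compact subset of $\mathcal U$ (times the capacitor coordinates) and so that $\partial_{\widehat W}G$ remains invertible there. This continued invertibility is exactly the index-one property.

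\emph{Step 2 (reduced ODE).} Since $M_z$ is positive definite ($M$ is a Gram matrix of a basis and $C_i>0$), the differential block becomes $\dot z=M_z^{-1}R(t,z,\phi(t,z))=:f(t,z)$. The map $f$ is $C^1$, hence locally Lipschitz in $z$ uniformly in $t$ on a compact neighbourhood. Picard--Lindel\"of gives $T^*>0$ and a unique $C^1$ solution $z$ on $[0,T^*]$ with $z(0)=z_0$, staying in $\mathcal N_z$. Setting $\widehat W(t)=\phi(t,z(t))$ gives $\widehat W\in C^1$ by the chain rule, and the pair solves~\eqref{dae_form}. Admissibility holds because the trajectory stays in the chosen neighbourhoods inside $\mathcal U$; if needed we reduce $T^*$ further, using continuity and the fact that $\mathcal U$ is open.

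\emph{Step 3 (uniqueness).} Take any other $C^1$ admissible solution $(\tilde z,\widehat{\tilde W})$ with the same initial data. By continuity it stays in $\mathcal N_z\times\mathcal N_{\widehat W}$ on some $[0,\tau]$. There the local uniqueness of the implicit function forces $\widehat{\tilde W}=\phi(t,\tilde z)$, so $\tilde z$ solves the same ODE and coincides with $z$. A standard continuation argument, taking the supremum of the times of agreement and reapplying the same reasoning at that time, extends agreement to all of $[0,T^*]$. The equivalence with~\eqref{eq:semidiscrete1}--\eqref{eq:semidiscrete2} is immediate, since the DAE is just these equations written in the coefficient basis.

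I expect the main obstacle to be Step 3 rather than existence. A competing algebraic branch $\widehat W$ with $G=0$ could exist away from $\mathcal N_{\widehat W}$. Uniqueness is therefore only within the class of solutions continuous at $t=0$ and starting from the prescribed $\widehat W_0$, and the continuation argument must be done carefully so that the trace cannot jump to another branch.
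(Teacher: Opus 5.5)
Your proposal is correct and follows the paper's own route: $M_z$ is invertible, the implicit function theorem at $(0,z_0,\widehat W_0)$ eliminates the traces, Picard--Lindel\"of is applied to the reduced ODE $\dot z=M_z^{-1}R(t,z,\Phi(t,z))$, and admissibility follows from openness of $\mathcal U$. Your two additions make explicit what the paper's proof uses tacitly. First, $\widehat W_0$ must itself lie in $\mathcal U$ so that Lemma~\ref{lem:reg} applies at the initial point. Second, uniqueness of the pair holds only among continuous solutions starting from $\widehat W_0$.
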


\begin{proof}
\emph{Step 1 (finite-dimensional DAE).}
The cell mass matrix $M=(\phi_i,\phi_j)_{\Th}$ is symmetric positive definite because the local
basis functions are linearly independent, and $\mathrm{diag}(C_i)$ is positive
definite for $C_i>0$. Hence $M_z$ is invertible.

\emph{Step 2 (elimination of the trace).}
By Lemma~\ref{lem:reg} the constraint map $G$ is $C^1$ in a neighbourhood of
$(0,z_0,\widehat W_0)$, and by hypothesis~(iii) its partial Jacobian $D_0$ is
invertible. The implicit function theorem gives a
neighbourhood of $(0,z_0)$ and a unique $\Phi\in C^1$ with
$\Phi(0,z_0)=\widehat W_0$ and  $G\bigl(t,z,\Phi(t,z)\bigr)=0$ .

\emph{Step 3 (reduced ODE).}
On that neighbourhood, 
\eqref{dae_form} reduces to
\[
  \dot z=M_z^{-1}R\bigl(t,z,\Phi(t,z)\bigr)=:\mathcal G(t,z),
  \qquad z(0)=z_0 .
\]
As a composition of $C^1$ maps with the constant matrix $M_z^{-1}$,
$\mathcal G$ is $C^1$ and hence locally Lipschitz in $z$. The Picard--Lindel\"of theorem
gives $T^*>0$ and a unique $z\in C^1([0,T^*])$. Setting
$\widehat W(t):=\Phi(t,z(t))$ yields a solution of \eqref{dae_form} \cite[Chapter~VI.1]{MR2657217}.

\emph{Step 4 (admissibility).}
Since $\mathcal U$ is open and the solution is continuous in time, it remains
admissible after shortening the interval if necessary.
\end{proof}

\begin{remark}[Trace Jacobian]\label{rem:scope}
The equations at each face involve only that face's traces, so the trace
Jacobian $D=\partial_{\widehat W}G$ is block diagonal, with one block per
interior face, boundary face, and junction. Hypothesis~(iii) therefore holds if
every block is nonsingular. The inflow and outlet blocks,
from~\eqref{eq:bc_inflow} and~\eqref{eq:bc_rcr}, have determinants
$\widehat U_h+\widehat c_h$ and
$P'(\widehat A_h)-R_1(\widehat U_h-\widehat c_h)$, both positive by
subcriticality. A linearisation of the junction conditions shows that the
junction blocks are nonsingular under the same assumption. The interior blocks
have eigenvalues $-(c\pm U)\neq0$ whenever the neighbouring cell states coincide
with the trace (Section~\ref{sec:interior}). In particular, (iii) holds at the
rest state used as initial condition in Section~\ref{sec:numerical_results}.
\end{remark}
\subsection{Entropy structure}\label{sec:entropy}

The system \eqref{equation_system} has a natural entropy,
which supplies both an admissibility criterion for weak solutions and the
analytical framework used below
\cite{tadmor1987numerical,harten1998convex}. We introduce a convex entropy
$\eta$, its entropy flux $q$ and the entropy variables $\vartheta=\eta'$ through
the usual compatibility relation with the physical flux. For the blood-flow model
the entropy has a transparent mechanical meaning: it is the sum of the kinetic
energy of the fluid column and the elastic energy stored in the vessel wall
\cite{formaggia2003one,MR4570554}. This structure yields positivity of the
discrete energy and, in Section~\ref{sec:energy_balance}, an exact balance for
the semi-discrete scheme.

\begin{definition}[Entropy pair]\label{def:entropy_defs}
A convex function $\eta:\mathcal U\to\mathbb R$ and a flux
$q:\mathcal U\to\mathbb R$ form an entropy pair for
$\partial_t\mathbf w+\partial_s\mathbf F(\mathbf w)=\mathbf S(\mathbf w)$ if
\[
  \eta'(\mathbf w)^\top\mathbf F'(\mathbf w)=q'(\mathbf w)^\top .
\]
The entropy variables are $\bm\vartheta(\mathbf w):=\eta'(\mathbf w)$. Smooth
solutions then satisfy the entropy balance
$\partial_t\eta+\partial_sq=\bm\vartheta^\top\mathbf S$ whenever the tube-law
coefficients are spatially constant. When the coefficients vary along the vessel, $\eta$ and
$q$ depend explicitly on $s$ and the identity acquires two extra terms,
\[
  \partial_t\eta+\partial_sq
  =\bm\vartheta^\top\mathbf S
   +(\partial_sq)_{\mathbf w}-\bm\vartheta^\top(\partial_s\mathbf F)_{\mathbf w};
\]
for the tube law \eqref{eq:tube_law} with constant $P^\circ$ these two terms
cancel, both being equal to $Q\,(\partial_sP)_A/\rho$.
\end{definition}

Let $P^\circ:=P_{0}+P_d$, so that $P(A_d,s)=P^\circ$ in every vessel
by \eqref{eq:tube_law}. Define the total head, the pressure potential and the
entropy by
\begin{equation}\label{eq:entropy}
  \mathcal H(\mathbf w):=\tfrac12U^2+\Psi(A),\qquad
  \Pi(A):=\frac1\rho\int_{A_d}^{A}\bigl(P(a)-P^\circ\bigr)\,da,\qquad
  \eta(\mathbf w):=\tfrac12AU^2+\Pi(A),
\end{equation}
with entropy flux $q(\mathbf w):=Q\bigl(\mathcal H-P^\circ/\rho\bigr)$.

\begin{lemma}(Entropy structure of the blood-flow system).\label{lem:entropy_pair}
The pair $(\eta,q)$ of \eqref{eq:entropy} is an entropy pair in the sense of
Definition~\ref{def:entropy_defs}, with entropy variables
$\bm\vartheta=\bigl(\mathcal H-P^\circ/\rho,\ Q\bigr)^\top$.
The Hessian of $\eta$ on $\mathcal U$ is
\begin{equation}\label{eq:hessian}
  \nabla^2\eta(\mathbf w)=\begin{pmatrix}c^2/A&U\\ U&A\end{pmatrix},
  \qquad \det\nabla^2\eta=c^2-U^2>0,
\end{equation}
so $\eta$ is strictly convex on $\mathcal{U}$, and $\mathbf S(\mathbf w)^\top\bm\vartheta(\mathbf w)=-fU^2\le 0.$
\end{lemma}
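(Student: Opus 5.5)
The proof is a direct calculation at fixed $s$ with spatially constant coefficients, treating $P$ as a function of $A$ only. We take $P'(A)>0$ from Lemma~\ref{lem:reg} and use $c^2=A\,P'(A)/\rho$ from \eqref{eq:wave_speed}, so that $\Psi'(A)=P'(A)/\rho=c^2/A$. The work splits into three pieces: the entropy variables and the compatibility relation, the Hessian and convexity, and the source dissipation.

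\emph{Entropy variables and compatibility.} Differentiating $\eta=\tfrac12AU^2+\Pi(A)$ gives
\[
\partial_A\eta=\tfrac12U^2+\Pi'(A)=\tfrac12U^2+\bigl(P(A)-P^\circ\bigr)/\rho=\mathcal H-P^\circ/\rho,\qquad \partial_U\eta=AU=Q,
\]
which is the stated $\bm\vartheta$. Next we check $\eta'^\top\mathbf F'=q'^\top$, using the flux Jacobian \eqref{eq:jacobian}. Write $h:=\mathcal H-P^\circ/\rho$.
\begin{itemize}
\item First component: $hU+Q\,c^2/A=hU+Uc^2$.
\item Second component: $hA+QU=hA+AU^2$.
\end{itemize}
On the other side, $q=Qh=AU\,h$ with $\partial_Ah=\Psi'=c^2/A$ and $\partial_Uh=U$. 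This gives $\partial_Aq=Uh+AU\cdot c^2/A=Uh+Uc^2$ and $\partial_Uq=Ah+AU\cdot U$, which match. For the $s$-dependent remark in Definition~\ref{def:entropy_defs}, note that $P^\circ$ is constant. Hence $(\partial_s q)_{\mathbf w}=Q(\partial_sP)_A/\rho$ and $\bm\vartheta^\top(\partial_s\mathbf F)_{\mathbf w}=Q(\partial_sP)_A/\rho$, since only the second flux row depends on $s$, and the two terms cancel. I would mention this only briefly.

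\emph{Hessian and convexity.} The second derivatives are
\[
\partial_{AA}\eta=\Psi'(A)=c^2/A,\qquad \partial_{AU}\eta=U,\qquad \partial_{UU}\eta=A,
\]
which is \eqref{eq:hessian}, with determinant $c^2-U^2$. On $\mathcal U$ we have $A>0$ and $|U|<c$. The leading entry $c^2/A$ is positive and the determinant is positive, so by Sylvester's criterion the Hessian is positive definite. To pass from this to strict convexity of $\eta$ itself, restrict $\eta$ to segments lying in the set.

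\emph{Main obstacle.} This last step is the only delicate point. The set $\{|U|<c(A)\}$ need not be convex, so positive definiteness of the Hessian gives strict convexity only on convex subsets, for example along each segment contained in $\mathcal U$. I would state convexity in that local sense, which is all that is used later.

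\emph{Source dissipation.} With $\mathbf S=(0,-fU/A)^\top$ and $\bm\vartheta=(h,AU)^\top$,
\[
\mathbf S^\top\bm\vartheta=-\frac{fU}{A}\cdot AU=-fU^2\le0,
\]
since $f=2(\zeta+2)\mu\pi/\rho>0$.
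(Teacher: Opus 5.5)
Your proposal is essentially the paper's proof. It uses the same first derivatives of $\eta$, the same check of $q'=\bm\vartheta^\top\mathbf F'$ against the flux Jacobian, the same Hessian and the same evaluation of $\mathbf S^\top\bm\vartheta$. Your use of Sylvester's criterion in place of the paper's ``positive trace and positive determinant'' is equivalent.

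The one thing to change is your ``main obstacle''. You should not weaken the convexity claim, because the set you worry about is convex here. For the tube law \eqref{eq:tube_law} with $m=\tfrac12$ one has $c(A)=\sqrt{\beta/(2\rho A_d)}\,A^{1/4}$, which is concave in $A$. Three sets are therefore convex:
\begin{itemize}
\item $\{U<c(A)\}$, the strict hypograph of a concave function;
\item $\{U>-c(A)\}$, the strict epigraph of a convex function;
\item $\{A>\tfrac12A_{\min}\}$, a half-plane.
\end{itemize}
Their intersection, the pointwise admissible state set, is convex. The discrete set $\mathcal U$ is also convex, since point values depend linearly on the coefficients. Positive definiteness of $\nabla^2\eta$ then gives strict convexity on all of $\mathcal U$, exactly as the lemma states; the paper uses this tacitly.

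A minor point: you should write $f\ge0$ rather than $f>0$, since $\mu=0$ is allowed (as in the convergence test). This still yields $\mathbf S^\top\bm\vartheta=-fU^2\le0$.
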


\begin{proof}
Since $\Pi'(A)=(P(A)-P^\circ)/\rho$, differentiation gives
$\eta_A=\tfrac12U^2+\Pi'(A)=\mathcal H-P^\circ/\rho$ and $\eta_U=Q$. With
$\mathbf F'=\bigl(\begin{smallmatrix}U&A\\ P'/\rho&U\end{smallmatrix}\bigr)$, a
direct computation gives $\partial_Aq=\vartheta_1U+QP'/\rho$ and
$\partial_Uq=\vartheta_1A+QU$, that is $q'=\bm\vartheta^\top\mathbf F'$, so
$(\eta,q)$ is an entropy pair.
The second derivatives are $\eta_{AA}=\Pi''(A)=P'(A)/\rho=c^2/A$, $\eta_{AU}=U$ and
$\eta_{UU}=A$. The Hessian has positive trace, and its determinant $c^2 -U^2$ is positive on $\mathcal{U}$, so it is positive definite. Finally
$\mathbf S^\top\bm\vartheta=(0,-fU/A)\cdot(\mathcal H-P^\circ/\rho,\,AU)^\top = -(fU/A)(AU)=-fU^2$.
\end{proof}
The two components of
$\bm\vartheta$ are the total head shifted by the common reference-pressure term
and the flow rate, respectively. The junction conditions
\eqref{eq:junction_transmission} conserve flow and impose continuity of total
head, hence also of its shifted value. Lemma~\ref{lemma:junction_energy} below
relies on this structure.

\begin{lemma}(Positivity of the energy).\label{energy_positivity}
Let
\begin{equation}\label{eq:energy}
  \mathscr E_h(t)=\int_{\Th}\eta(\wh)\,dx
  +\sum_{i\in\mathrm{RCR}}\frac{C_i}{2\rho}\bigl(P_{c,i}-P_{\mathrm{out},i}\bigr)^2 .
\end{equation}
Then $\mathscr E_h(t)\ge0$ at every admissible state, with equality only at the rest state $A=A_d$, $U=0$,
$P_{c,i}=P_{\mathrm{out},i}$.
\end{lemma}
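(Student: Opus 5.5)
The plan is to show separately that each summand in \eqref{eq:energy} is nonnegative, and then characterise when both vanish. The capacitor terms $\frac{C_i}{2\rho}(P_{c,i}-P_{\mathrm{out},i})^2$ are nonnegative because $C_i>0$ and $\rho>0$. They vanish exactly when $P_{c,i}=P_{\mathrm{out},i}$. It therefore remains to treat the integral $\int_{\Th}\eta(\wh)\,dx$. I will show that $\eta(\mathbf w)\ge0$ pointwise for every $\mathbf w=(A,U)$ with $A>0$, with equality only at $(A_d,0)$. Integrating then gives nonnegativity of the volume term.

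For the pointwise bound, split $\eta=\tfrac12AU^2+\Pi(A)$. The kinetic part is $\ge0$ since $A>0$, and it vanishes iff $U=0$. For the elastic part, write
\[
\Pi(A)=\frac1\rho\int_{A_d}^{A}\bigl(P(a)-P^\circ\bigr)\,da .
\]
We have $P(A_d)=P^\circ$ and, by Lemma~\ref{lem:reg} (equivalently hypothesis (i) of Theorem~\ref{thm:existence_uniqueness}), $P'>0$. Hence $P(a)-P^\circ$ has the same sign as $a-A_d$, so the integrand has the same sign as the orientation of the integration interval. It follows that $\Pi(A)\ge0$, with $\Pi(A)=0$ iff $A=A_d$. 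One could alternatively appeal to convexity from Lemma~\ref{lem:entropy_pair}, using $\eta(A_d,0)=0$ and $\eta'(A_d,0)=\bm\vartheta(A_d,0)=0$. However, the Hessian there is only guaranteed positive on the subcritical set $\mathcal U$, which need not be convex. The direct splitting avoids this issue, since it works for all $A>0$ without subcriticality, so I use it. In a tapered vessel $A_d$ and $P$ depend on $s$, but the argument is pointwise in $s$ and $P(A_d(s),s)=P^\circ$ holds for every $s$ by \eqref{eq:tube_law}.

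For the equality case: if $\mathscr E_h=0$, every term vanishes, so $P_{c,i}=P_{\mathrm{out},i}$ and $\int_{\Th}\eta(\wh)=0$. Since $\eta(\wh)$ is continuous on each element (polynomial fields composed with a continuous function) and nonnegative, it must vanish identically on each $K$. This forces $U_h\equiv0$ and $A_h\equiv A_d$. The converse direction is immediate.

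The main, if mild, obstacle is this last step. When $A_d(s)$ is not a polynomial (tapered vessels), $A_h\equiv A_d$ can hold only if $A_d$ lies in the discrete space. The statement should therefore be read as characterising equality through the pointwise condition $A_h=A_d$, $U_h=0$ on each element. I would simply note this caveat and otherwise keep the pointwise argument.
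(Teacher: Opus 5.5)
Your proof is correct and follows essentially the same route as the paper. Both split $\eta$ into the kinetic part $\tfrac12AU^2$ and the elastic part $\Pi$, and both use only $P(A_d)=P^\circ$ and $P'>0$. The paper phrases this as strict convexity of $\Pi$ with $\Pi(A_d)=\Pi'(A_d)=0$, which is equivalent to your sign-of-the-integrand argument. Your continuity argument for the equality case, and your caveat that $A_h\equiv A_d$ requires $A_d$ to lie in the discrete space, are refinements the paper does not spell out.
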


\begin{proof}
By construction $\Pi(A_d)=0$, and $\Pi'(A_d)=0$ because $P(A_d)=P^\circ$. Since $\Pi''(A)=P'(A)/\rho>0,\qquad A>0$, $\Pi$ is strictly convex with its unique minimum at $A_d$, so $\Pi \ge 0$ with equality only
at $A=A_d$. Since $\tfrac12AU^2\ge0$ for $A>0$, vanishing only when $U=0$, we
obtain $\eta\ge0$ with the stated equality case. The Windkessel contributions in
\eqref{eq:energy} are squares with $C_i\ge0$, and $\mathscr E_h\ge0$ follows by
integration and summation.
\end{proof}

\subsection{Discrete energy balance}\label{sec:energy_balance}
We assume exact spatial integration and a constant reference pressure $P^\circ$. Write $f_{K,F}:=\widehat{\mathbf F}_h\cdot\bm n$ for the numerical normal flux on the face $F$ of the element $K$.
\begin{lemma}(Discrete energy identity).\label{lemma:eta_inequality}
Let $(\wh,\who)\in\mathcal U$ solve \eqref{eq:semidiscrete1}--\eqref{eq:semidiscrete2}
with polynomial degree $p=k$, and let $v_h$ denote the elementwise $L^2$
projection of $\bm\vartheta(\wh,s)$ onto $\mathbf W_h^k$. Define 
\begin{align}
  R_h&:=\sum_{K}\sum_{F\subset\partial K}
    \Bigl[\,v_h(F)^\top f_{K,F}
      -\sigma_{K,F}\,q(\widehat{\mathbf w}_{K,F},F)\Bigr],
  \label{eq:Rh}\\[2pt]
  V_h&:=\sum_{K}\Bigl[\bigl(\mathbf F(\wh,s),\partial_sv_h\bigr)_K
      +\bigl(\mathbf S,\,v_h-\bm\vartheta(\wh,s)\bigr)_K\Bigr].
  \label{eq:Vh}
\end{align}
Then 
\begin{equation}\label{eq:semi_discrete_energy}
  \frac{d}{dt}\int_{\Th}\eta(\wh,s)
  +\int_{\Th}fU_h^2
  +\sum_{K}\sum_{F\subset\partial K}\sigma_{K,F}\,q(\widehat{\mathbf w}_{K,F},F)
  +R_h=V_h .
\end{equation}
For $p=0$ with coefficients constant on each element one has $v_h=\bm\vartheta(\wh)$
and $V_h=0$.
\end{lemma}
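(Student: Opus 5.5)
The plan is to test the local equation \eqref{eq:semidiscrete1} on each element with $\mathbf v=v_h|_K$, the elementwise $L^2$ projection of $\bm\vartheta(\wh,s)$, sum over $K\in\Th$, and identify each resulting term with one in \eqref{eq:semi_discrete_energy}. The argument is purely algebraic bookkeeping; no trace equation \eqref{eq:semidiscrete2} is needed, since the trace contributions are kept explicitly through $\sigma_{K,F}q(\widehat{\mathbf w}_{K,F},F)$ and $R_h$.

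\emph{Time derivative.} Because $\partial_t\wh\in\mathbf W_h^k$ and $v_h$ is the $L^2$ projection, we have $(\partial_t\wh,v_h)_K=(\partial_t\wh,\bm\vartheta(\wh,s))_K$. By the chain rule with $\bm\vartheta=\eta'$ (Definition~\ref{def:entropy_defs}, Lemma~\ref{lem:entropy_pair}) and time-independent coefficients, this equals $\tfrac{d}{dt}\int_K\eta(\wh,s)$. Summing over elements gives the first term.

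\emph{Source.} Write $(\mathbf S,v_h)_K=(\mathbf S,\bm\vartheta)_K+(\mathbf S,v_h-\bm\vartheta)_K$. By Lemma~\ref{lem:entropy_pair}, $\mathbf S^\top\bm\vartheta=-fU_h^2$, so the first piece moves to the left as $\int fU_h^2$. The second piece is the source part of $V_h$.

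\emph{Volume and face terms.} The volume term $-(\mathbf F(\wh),\partial_s v_h)_K$ is moved to the right-hand side, where it forms the flux part of $V_h$. The face term $\langle \widehat{\mathbf F}_h\cdot\bm n,v_h\rangle_{\partial K}=\sum_F v_h(F)^\top f_{K,F}$ is written as
\[
\sum_F\sigma_{K,F}q(\widehat{\mathbf w}_{K,F},F)+\Bigl[v_h(F)^\top f_{K,F}-\sigma_{K,F}q(\widehat{\mathbf w}_{K,F},F)\Bigr],
\]
and the bracket summed over $K,F$ is precisely $R_h$ from \eqref{eq:Rh}. Collecting terms gives \eqref{eq:semi_discrete_energy} identically. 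Exact spatial integration is used only to make the discrete inner products coincide with the integrals in these manipulations.

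\emph{Case $p=0$.} For $p=0$ with elementwise constant coefficients, $\wh$ is constant on $K$, so $\bm\vartheta(\wh)$ is constant and lies in $\mathbf W_h^0$; hence $v_h=\bm\vartheta(\wh)$. Then $\partial_s v_h=0$ and $v_h-\bm\vartheta=0$, which gives $V_h=0$.

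\emph{Main obstacle.} The one delicate point is the time-derivative identity. It relies on $\partial_t\wh$ lying in the test space, so that the projection can be removed, and on the $s$-dependence of $\eta$ being time-independent. Both hold here, so no genuine estimate is required. The real content, namely the sign of $R_h$ via the junction and HLL structure, is deferred to later lemmas such as Lemma~\ref{lemma:junction_energy}.
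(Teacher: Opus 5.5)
Your proposal is correct and matches the paper's proof essentially step for step. You test \eqref{eq:semidiscrete1} with $v_h$, and you remove the projection in the time-derivative term by orthogonality, since $\partial_t\wh\in\mathbf W_h^k$. You then split the source using $\mathbf S^\top\bm\vartheta=-fU_h^2$, move the volume term into $V_h$, and add and subtract $\sigma_{K,F}\,q(\widehat{\mathbf w}_{K,F},F)$ on the faces to produce $R_h$. Your treatment of $p=0$ is also the paper's: there $v_h=\bm\vartheta(\wh)\in\mathbf W_h^0$, so both parts of $V_h$ vanish.
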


\begin{proof}
Test \eqref{eq:semidiscrete1} with $\mathbf v=v_h$. Since
$\partial_t\wh\in\mathbf W_h^k$, projection orthogonality gives
\[
  \bigl(\partial_t\wh,v_h\bigr)_K=\bigl(\partial_t\wh,\bm\vartheta\bigr)_K
  =\int_K\partial_t\eta=\frac{d}{dt}\int_K\eta ,
\]
while $\bm\vartheta^\top\mathbf S=-fU_h^2$ from Lemma~\ref{lem:entropy_pair} gives
$(\mathbf S,v_h)_K=-\int_KfU_h^2+(\mathbf S,v_h-\bm\vartheta)_K$. Summing over $K$ and collecting the volume and projection terms in
$V_h$ gives
\[
  \frac{d}{dt}\int_{\Th}\eta+\int_{\Th}fU_h^2
  +\sum_{K,F}v_h(F)^\top f_{K,F}=V_h .
\]
Adding and subtracting the physical trace entropy fluxes
$\sigma_{K,F}\,q(\widehat{\mathbf w}_{K,F})$ in the face sum yields \eqref{eq:semi_discrete_energy}.

For $p=0$ with elementwise constant coefficients the function
$\bm\vartheta(\wh,s)$ already lies in $\mathbf W_h^0$, so $v_h=\bm\vartheta(\wh)$,
$\partial_sv_h=0$ and $v_h=\bm\vartheta$, and $V_h=0$.
\end{proof}

The term $R_h$ contains the complete HLL flux, including its physical-flux part, and has no
sign in general.

\begin{lemma}(Junction energy flux).\label{lemma:junction_energy}
Under the junction conditions \eqref{eq:junction_transmission},
\[
  \Sigma_J:=\sum_{i\in J}s_i\,q(\widehat{\mathbf w}_i)=0 .
\]
\end{lemma}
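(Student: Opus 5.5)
The argument is a direct computation from the definition of the entropy flux in \eqref{eq:entropy} combined with the two kinds of rows in the junction conditions \eqref{eq:junction_transmission}. The only point needing care is that the reference term $P^\circ$ is common to all incident vessels.

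First, write the trace entropy flux of each branch in product form,
\[
  q(\widehat{\mathbf w}_i)=\widehat Q_i\Bigl(\mathcal H(\widehat{\mathbf w}_i)-\frac{P^\circ}{\rho}\Bigr),
  \qquad \widehat Q_i=\widehat A_i\widehat U_i .
\]
This is the flow rate times the first entropy variable of Lemma~\ref{lem:entropy_pair}, evaluated at the trace.

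Next, use rows $2$ to $K$ of \eqref{eq:junction_transmission}. They give $\mathcal H(\widehat{\mathbf w}_i)=\mathcal H(\widehat{\mathbf w}_1)=:\mathcal H_J$ for every $i$. Because $P^\circ=P_0+P_d$ is assumed to be the same constant in every vessel, as stated at the start of Section~\ref{sec:energy_balance}, the shifted head $\mathcal H_J-P^\circ/\rho$ is also a single common scalar at the junction. It can therefore be factored out of the sum:
\[
  \Sigma_J=\Bigl(\mathcal H_J-\frac{P^\circ}{\rho}\Bigr)\sum_{i\in J}s_i\,\widehat A_i\widehat U_i .
\]
By row $1$ of \eqref{eq:junction_transmission}, which is mass conservation, the remaining sum vanishes. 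Hence $\Sigma_J=0$.

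The main thing to check is the hypothesis that lets $P^\circ$ factor out. If the reference pressures $P_0+P_d$ differed between branches, the shifted heads would no longer coincide even though the total heads do. One would then be left with the residual $-\rho^{-1}\sum_i s_i\widehat Q_iP^\circ_i$, which need not vanish. So the proof should state explicitly that it uses the common constant $P^\circ$ assumed for the energy analysis.

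It should also note that $\Psi$ in $\mathcal H$ is evaluated with each branch's own tube-law coefficients at $s_F$. This does not affect the argument, because only the equality of the $\mathcal H$ values is used, not the form of $\Psi$.
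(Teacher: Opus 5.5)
Your proof is correct and follows the paper's own argument. You use the head-continuity rows to factor the common shifted head $\mathcal H^*-P^\circ/\rho$ out of $\sum_i s_i\widehat Q_i(\mathcal H_i-P^\circ/\rho)$, and the mass-conservation row then makes the remaining sum vanish. You also state explicitly that this needs a single reference pressure $P^\circ$ shared by all branches; the paper leaves that implicit in the standing assumption of Section~\ref{sec:energy_balance}, so this is a useful clarification.
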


\begin{proof}
All branches share one total head value
$\mathcal H(\widehat{\mathbf w}_i)\equiv\mathcal H^*$, so
\[
  \Sigma_J=\bigl(\mathcal H^*-P^\circ/\rho\bigr)\sum_{i\in J}s_i\widehat Q_i
  =\bigl(\mathcal H^*-P^\circ/\rho\bigr)\sum_{i\in J}s_i\widehat A_i\widehat U_i=0 .
  \qedhere
\]
by the conservation of trace flow.
\end{proof}

\begin{theorem}(Energy balance).\label{thm:energy_balance}
On the interval $[0, T^*]$ of Theorem~\ref{thm:existence_uniqueness},
assume prescribed-flow inlets
and capacitive $\mathrm{RCR}$ outlets $P_{\mathrm{out},i}$, and the junction conditions  \eqref{eq:junction_transmission}. Then
\begin{equation}\label{eq:energy_balance}
  \frac{d}{dt}\mathscr E_h
  +\int_{\Th}fU_h^2
  +\sum_{i\in\mathrm{RCR}}
    \left[\frac{R_{1,i}\widehat Q_i^{\,2}}{\rho}
      +\frac{(P_{c,i}-P_{\mathrm{out},i})^2}{\rho R_{2,i}}\right]
  =\mathcal B_\partial+V_h-R_h ,
\end{equation}
where the friction and resistive terms on the left are nonnegative and
\begin{equation}\label{eq:boundary_term}
  \mathcal B_\partial
  =\sum_{\mathrm{inlets}}Q_{\mathrm{in}}
     \bigl(\widehat{\mathcal H}-P^\circ/\rho\bigr)
   -\sum_{i\in\mathrm{RCR}}\widehat Q_i
     \left[\tfrac12\widehat U_i^2
       +\frac{P_{\mathrm{out},i}-P^\circ}{\rho}\right] .
\end{equation}
$V_h-R_h$ collect the boundary power and numerical-flux and volume contributions. Resistive outlets are included by setting $C_i = 0$, replacing $R_{1,i}$ by $R_i$, and dropping the
capacitor terms.
\end{theorem}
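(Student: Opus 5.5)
The plan is to start from the discrete energy identity of Lemma~\ref{lemma:eta_inequality}, which already isolates $\frac{d}{dt}\int_{\Th}\eta$, the friction term, the face sum of trace entropy fluxes $\sum_{K,F}\sigma_{K,F}q(\widehat{\mathbf w}_{K,F})$, and the remainders $R_h$ and $V_h$. The work is then to evaluate this face sum face by face, using the partition $\Eh=\Eho\cup\Ehb\cup\Ehj$, and to add the time derivative of the Windkessel part of $\mathscr E_h$ from \eqref{eq:energy}.

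\emph{Interior faces.} Each face is shared by $K_L$ and $K_R$ with $\sigma_L=+1$, $\sigma_R=-1$, and the trace is single-valued. The two contributions $q(\widehat{\mathbf w}_h)-q(\widehat{\mathbf w}_h)$ therefore cancel identically.

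\emph{Junctions.} The orientation $s_i$ equals $\sigma$ of the incident element, so the contribution at each junction is exactly $\Sigma_J$. This vanishes by Lemma~\ref{lemma:junction_energy}.

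\emph{Boundary faces.} Only these survive.

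At an inlet, $\sigma=-1$ and the boundary operator \eqref{eq:bc_inflow} gives $\widehat Q=Q_{\mathrm{in}}$. The term is $-Q_{\mathrm{in}}(\widehat{\mathcal H}-P^\circ/\rho)$, and moving it to the right-hand side produces the inlet part of $\mathcal B_\partial$.

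At an RCR outlet, $\sigma=+1$, and the term is $\widehat Q_i(\tfrac12\widehat U_i^2+(P(\widehat A_i)-P^\circ)/\rho)$. Insert the first row of \eqref{eq:bc_rcr}, $P(\widehat A_i)=R_{1,i}\widehat Q_i+P_{c,i}$. Then split $P_{c,i}-P^\circ=(P_{c,i}-P_{\mathrm{out},i})+(P_{\mathrm{out},i}-P^\circ)$. This yields
\[
\frac{R_{1,i}\widehat Q_i^2}{\rho}+\frac{\widehat Q_i(P_{c,i}-P_{\mathrm{out},i})}{\rho}+\widehat Q_i\Bigl[\tfrac12\widehat U_i^2+\frac{P_{\mathrm{out},i}-P^\circ}{\rho}\Bigr].
\]
The last bracket, moved to the right, gives the outlet part of $\mathcal B_\partial$ in \eqref{eq:boundary_term}.

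\emph{Capacitor energy.} Differentiate $\frac{C_i}{2\rho}(P_{c,i}-P_{\mathrm{out},i})^2$ and use the capacitor equation \eqref{eq:rcr_capacitor}, treating $P_{\mathrm{out},i}$ as constant. This gives
\[
\frac{1}{\rho}(P_{c,i}-P_{\mathrm{out},i})\widehat Q_i-\frac{(P_{c,i}-P_{\mathrm{out},i})^2}{\rho R_{2,i}}.
\]
The cross term $\widehat Q_i(P_{c,i}-P_{\mathrm{out},i})/\rho$ from the outlet face flux cancels exactly against the first term here. What remains is the dissipative term $R_{1,i}\widehat Q_i^2/\rho+(P_{c,i}-P_{\mathrm{out},i})^2/(\rho R_{2,i})$. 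Collecting everything gives \eqref{eq:energy_balance}, with $V_h-R_h$ on the right as inherited from Lemma~\ref{lemma:eta_inequality}. The left-hand terms are nonnegative because $f\ge0$ and $R_{1,i},R_{2,i}>0$.

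\emph{Resistive outlets.} With $C_i=0$, use \eqref{eq:bc_resistive}, i.e.\ $P(\widehat A_i)=P_{\mathrm{out},i}+R_i\widehat Q_i$. The same splitting then gives $R_i\widehat Q_i^2/\rho$ plus the $\mathcal B_\partial$ bracket, with no capacitor term.

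The main obstacle is bookkeeping rather than analysis. One must check that the orientation signs at inlets, outlets and junctions match $\sigma_{K,F}$ in Lemma~\ref{lemma:eta_inequality}. One must also check that the capacitor cross term cancels with the correct sign and the factor $1/\rho$. In addition, time regularity on $[0,T^*]$ from Theorem~\ref{thm:existence_uniqueness} is needed to justify the differentiation of $\mathscr E_h$.
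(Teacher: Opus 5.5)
Your proposal is correct and follows the paper's proof essentially step for step. You start from the identity of Lemma~\ref{lemma:eta_inequality}, cancel the interior and junction face terms (the latter via Lemma~\ref{lemma:junction_energy}), and insert the pressure-continuity row at the RCR outlets. You then use the capacitor equation multiplied by $(P_{c,i}-P_{\mathrm{out},i})/\rho$ to turn the cross term into $\frac{d}{dt}$ of the terminal energy plus the $R_{2,i}$ dissipation, and you spell out $\widehat Q=Q_{\mathrm{in}}$ and the resistive-outlet case, which the paper leaves implicit.
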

\begin{proof}
We evaluate the face sum in \eqref{eq:semi_discrete_energy}. On an interior face the trace is single-valued and
$\sigma_L=-\sigma_R$, so the two contributions cancel. At a junction
they cancel by Lemma~\ref{lemma:junction_energy}. At an inlet, where
$\sigma=-1$, the contribution is
$-Q_{\mathrm{in}}\bigl(\widehat{\mathcal H}-P^\circ/\rho\bigr)$.
At an $\mathrm{RCR}$ outlet, we expand the flux at the $\mathrm{RCR}$ face,
where $\sigma=+1$, and insert the pressure-continuity row
$P(\widehat A_i)=R_{1,i}\widehat Q_i+P_{c,i}$ of \eqref{eq:bc_rcr}. This gives
\begin{equation}\label{eq:out1}
  \widehat Q_i\bigl(\widehat{\mathcal H}_i-P^\circ/\rho\bigr)
  =\widehat Q_i\left(\tfrac12\widehat U_i^2
     +\frac{P(\widehat A_i)-P^\circ}{\rho}\right)
  =\tfrac12\widehat Q_i\widehat U_i^2
   +\frac{R_{1,i}\widehat Q_i^{\,2}}{\rho}
   +\frac{\widehat Q_i\bigl(P_{c,i}-P^\circ\bigr)}{\rho}.
\end{equation}
Multiplying the capacitor equation
$C_i\dot P_{c,i}=\widehat Q_i-(P_{c,i}-P_{\mathrm{out},i})/R_{2,i}$ by
$(P_{c,i}-P_{\mathrm{out},i})/\rho$, and using
$(P_{c,i}-P_{\mathrm{out},i})\dot P_{c,i}
=\tfrac12\tfrac{d}{dt}(P_{c,i}-P_{\mathrm{out},i})^2$ for constant
$P_{\mathrm{out},i}$, gives the energy law of the terminal,
\begin{equation}\label{eq:out2}
  \frac{d}{dt}\left[\frac{C_i}{2\rho}(P_{c,i}-P_{\mathrm{out},i})^2\right]
  +\frac{(P_{c,i}-P_{\mathrm{out},i})^2}{\rho R_{2,i}}
  =\frac{\widehat Q_i}{\rho}\bigl(P_{c,i}-P_{\mathrm{out},i}\bigr).
\end{equation}
That is, storage rate plus resistive loss equals the power received from the
vessel. Solving \eqref{eq:out2} for $\widehat Q_iP_{c,i}/\rho$ and substituting
into \eqref{eq:out1},
\begin{equation}\label{eq:out3}
  \widehat Q_i\bigl(\widehat{\mathcal H}_i-P^\circ/\rho\bigr)
  =\frac{d}{dt}\!\left[\frac{C_i(P_{c,i}-P_{\mathrm{out},i})^2}{2\rho}\right]
   +\underbrace{\frac{R_{1,i}\widehat Q_i^{\,2}}{\rho}
     +\frac{(P_{c,i}-P_{\mathrm{out},i})^2}{\rho R_{2,i}}}_{\ \ge0}
   +\widehat Q_i\left[\tfrac12\widehat U_i^2
     +\frac{P_{\mathrm{out},i}-P^\circ}{\rho}\right].
\end{equation}
The first term is the time derivative of the terminal part of $\mathscr E_h$ in
\eqref{eq:energy} and is absorbed into $\tfrac{d}{dt}\mathscr E_h$. The second is
nonnegative and stays on the left of \eqref{eq:energy_balance}. The third,
together with the inlet contribution, forms
$\mathcal B_\partial$ as given in \eqref{eq:boundary_term}. Substituting into
\eqref{eq:semi_discrete_energy} proves the balance.
\end{proof}

\begin{corollary}(Closed network)\label{closed_network}
If $\Ehb=\emptyset$, then $\mathcal B_\partial\equiv0$ and
\[
  \frac{d}{dt}\mathscr E_h+\int_{\Th}fU_h^2=V_h-R_h,
\]
so the discrete energy is non-increasing provided $R_h-V_h\ge0$.
\end{corollary}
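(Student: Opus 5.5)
The corollary follows from Theorem~\ref{thm:energy_balance} by specialising the boundary data, so the plan is to go back through the face-sum bookkeeping in that proof and check what survives when $\Ehb=\emptyset$.

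First, I would note that with no exterior boundary faces there are no inlets and no terminals at all. In particular there are no $\mathrm{RCR}$ or resistive outlets. Both sums in \eqref{eq:boundary_term} are therefore empty, which gives $\mathcal B_\partial\equiv0$. For the same reason, the Windkessel dissipation sum on the left of \eqref{eq:energy_balance} vanishes. The capacitor part of $\mathscr E_h$ in \eqref{eq:energy} also disappears, so $\mathscr E_h=\int_{\Th}\eta(\wh)$.

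To be safe, I would argue directly from the discrete energy identity \eqref{eq:semi_discrete_energy} of Lemma~\ref{lemma:eta_inequality} rather than quoting the theorem. The face sum $\sum_{K,F}\sigma_{K,F}q(\widehat{\mathbf w}_{K,F},F)$ ranges only over interior faces and junctions.
\begin{itemize}
\item On an interior face the trace is single-valued and $\sigma_L=-\sigma_R$, so the two contributions cancel.
\item At each junction the contributions cancel by Lemma~\ref{lemma:junction_energy}. This uses flow conservation together with continuity of total head, which is where the shifted entropy variable $\mathcal H-P^\circ/\rho$ enters.
\end{itemize}
Equation \eqref{eq:semi_discrete_energy} then reduces to
\[
\frac{d}{dt}\mathscr E_h+\int_{\Th}fU_h^2=V_h-R_h .
\]

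For monotonicity, I would use $f>0$ and $U_h^2\ge0$, so the friction integral is nonnegative. This gives
\[
\frac{d}{dt}\mathscr E_h\le V_h-R_h\le0
\]
whenever $R_h-V_h\ge0$, so $\mathscr E_h$ is non-increasing on $[0,T^*]$.

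The only point needing care is whether a terminal could exist outside $\Ehb$. By the paper's classification, every terminal node lies in $\Ehb$, so the hypothesis $\Ehb=\emptyset$ removes all of them. Beyond that, the argument is purely bookkeeping.
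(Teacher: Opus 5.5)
Your proposal is correct and follows essentially the same route as the paper. The paper reads the corollary off Theorem~\ref{thm:energy_balance}, whose proof cancels the interior-face and junction terms in \eqref{eq:semi_discrete_energy} exactly as you do, once all inlet and terminal contributions are empty. One small nit: you only need $f\ge0$, not $f>0$, for the friction integral to be nonnegative (the manufactured test uses $\mu=0$, hence $f=0$).
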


\begin{corollary}(Energy bound)\label{given_inflow}
Under the hypotheses of Theorem~\ref{thm:energy_balance},
\begin{equation}\label{standard_energy}
  \mathscr E_h(t)\le\mathscr E_h(0)
  +\int_0^t\Bigl(|\mathcal B_\partial|+|V_h|+|R_h|\Bigr)\,d\tau,
  \qquad 0\le t\le T^* .
\end{equation}
\end{corollary}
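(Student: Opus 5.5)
The plan is to integrate the energy balance \eqref{eq:energy_balance} of Theorem~\ref{thm:energy_balance} in time and then discard every term with a favourable sign. By Theorem~\ref{thm:existence_uniqueness} the solution $(z,\widehat W)$ is $C^1$ on $[0,T^*]$ and stays in $\mathcal U$. Hence $\mathscr E_h(t)$ is a $C^1$ function of time, since it is an integral of the smooth function $\eta$ of the coefficient vector plus quadratic terminal terms. For the same reason $\mathcal B_\partial$, $V_h$ and $R_h$ are continuous functions of time on $[0,T^*]$: they are finite sums of continuous functions of $(z,\widehat W)$ and the $C^1$ data, with $v_h$ the $L^2$ projection of a $C^1$ function of $\wh$. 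All the time integrals below are therefore well defined Riemann integrals.

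\emph{Step 1.} Move the dissipative terms to the right-hand side of \eqref{eq:energy_balance}:
\[
\frac{d}{dt}\mathscr E_h=\mathcal B_\partial+V_h-R_h-\int_{\Th}fU_h^2-\sum_{i\in\mathrm{RCR}}\Bigl[\frac{R_{1,i}\widehat Q_i^{\,2}}{\rho}+\frac{(P_{c,i}-P_{\mathrm{out},i})^2}{\rho R_{2,i}}\Bigr].
\]
The last two groups are nonnegative. Indeed $f>0$ is a physical constant, $R_{1,i}\ge0$, $R_{2,i}>0$ by hypothesis~(i) of Theorem~\ref{thm:existence_uniqueness}, and $\rho>0$. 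This yields the pointwise inequality
\[
\frac{d}{dt}\mathscr E_h\le \mathcal B_\partial+V_h-R_h\le|\mathcal B_\partial|+|V_h|+|R_h|.
\]

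\emph{Step 2.} Integrate from $0$ to $t\le T^*$ using the fundamental theorem of calculus, which gives exactly \eqref{standard_energy}. For resistive outlets we use the convention stated after Theorem~\ref{thm:energy_balance}: set $C_i=0$, replace $R_{1,i}$ by $R_i\ge0$, and drop the capacitor terms. The dissipative term remains nonnegative, so the same argument applies verbatim.

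The only point requiring care is regularity. We need $\mathscr E_h$ to be differentiable and the right-hand side to be integrable on the closed interval. Both follow from the $C^1$ regularity and admissibility supplied by Theorem~\ref{thm:existence_uniqueness} together with Lemma~\ref{lem:reg}. I expect no genuine obstacle beyond this. Note that $\mathscr E_h\ge0$ (Lemma~\ref{energy_positivity}) is not needed for the bound itself. It only makes the estimate meaningful as control of a nonnegative quantity.
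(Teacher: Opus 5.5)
Your proposal is correct and is essentially the paper's own proof: drop the nonnegative friction and resistive terms in \eqref{eq:energy_balance}, bound $\mathcal B_\partial+V_h-R_h$ by $|\mathcal B_\partial|+|V_h|+|R_h|$, and integrate over $[0,t]$, with your regularity remarks making explicit what the paper leaves implicit. One small correction: $f=2(\zeta+2)\mu\pi/\rho$ is only nonnegative (the manufactured test uses $\mu=0$), but $f\ge0$ is all the argument needs.
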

\begin{proof}
Discard the nonnegative friction and resistive terms in
\eqref{eq:energy_balance} and integrate in time.
\end{proof}
The bound \eqref{standard_energy} is not an a priori estimate: $\mathcal{B}_{\partial}$, $V_h$ and $R_h$ depend on the computed boundary state and on the solution itself.

\begin{remark}(Gravitational source)\label{rem:gravity}
When a gravitational body force is included, the momentum source carries the
extra term $\bm g\cdot\bm b(\bm x)$, contributing the power
$\int_{\Th}(\bm g\cdot\bm b)\,Q_h$ to the right-hand side of
\eqref{eq:semi_discrete_energy} and \eqref{eq:energy_balance}, where it is
collected in $V_h$. Its absolute value contributes to the right-hand side of \eqref{standard_energy}.
\end{remark}
\section{Time Integration}\label{sec:time_integration}

The semi-discrete problem has the residual DAE form
\begin{equation}
  \mathbf F(t,\mathbf y,\dot{\mathbf y})=\mathbf 0
  \label{eq:time_dae}
\end{equation}
with the differential and algebraic components identified below. Let $t_n$ be
the current time, $h_n=t_n-t_{n-1}$ the current step size, and $q_n$ the
order of a variable-step BDF formula. The multistep approximation satisfies
\begin{equation}
  \sum_{i=0}^{q_n}\alpha_{n,i}\mathbf y_{n-i}
  = h_n\dot{\mathbf y}_n,
  \label{eq:bdf_formula}
\end{equation}
where the coefficients $\alpha_{n,i}$ are determined by the order and the
history of the step sizes. Substitution into the residual DAE produces the
nonlinear algebraic problem
\begin{equation}
  \mathbf G_n(\mathbf y_n)
  \equiv
  \mathbf F\!\left(t_n,\mathbf y_n,
  \frac{1}{h_n}\sum_{i=0}^{q_n}\alpha_{n,i}\mathbf y_{n-i}\right)
  =\mathbf 0 .
  \label{eq:bdf_residual}
\end{equation}
Newton linearization of this problem gives the following result.
\begin{equation}
  \mathcal J_n
  =\frac{\partial\mathbf G_n}{\partial\mathbf y_n}
  =\frac{\partial\mathbf F}{\partial\mathbf y}
   +\gamma_n\frac{\partial\mathbf F}{\partial\dot{\mathbf y}},
  \qquad
  \gamma_n=\frac{\alpha_{n,0}}{h_n}.
  \label{eq:newton_linearization}
\end{equation}
This relation is the mathematical link between the BDF time discretization and
the global Newton system. The numerical computations use the SUNDIALS IDA
solver as the backend for this residual DAE procedure \cite{hindmarsh2005sundials}.

\paragraph{Global unknown vector.}
The unknowns are grouped into a cell block, a trace block, and a capacitor
block,
\begin{equation}
  \mathbf y
  =
  \bigl(\mathbf y_{\mathrm{cell}},\;
        \mathbf y_{\mathrm{trace}},\;
        \mathbf y_{\mathrm{pc}}\bigr),
  \label{eq:global_state}
\end{equation}
\begin{equation}
  \mathbf y_{\mathrm{pc}}
  =\bigl(P_{c,1},\dots,P_{c,N_{\mathrm{rcr}}}\bigr),
  \label{eq:capacitor_state}
\end{equation}
where $\mathbf y_{\mathrm{pc}}$ carries one capacitor pressure $P_{c,i}$ for
each \emph{capacitive} ($C_i>0$) $\mathrm{RCR}$ terminal. Purely resistive ($C_i=0$)
outlets impose pressure through algebraic conditions in $\mathbf R_{\mathrm{trace}}$.

\paragraph{Three-block residual.}
Stacking the cell weak form~\eqref{eq:hdg_local}, the transmission conditions,
and the Windkessel residual~\eqref{eq:rcr_capacitor} gives
\begin{equation}
  \mathbf F(t,\mathbf y,\dot{\mathbf y})
  =
  \begin{pmatrix}
  M_K\,\dot{\mathbf y}_{\mathrm{cell}}-\mathbf R_{\mathrm{cell}}(t,\mathbf y)\\[6pt]
  \mathbf R_{\mathrm{trace}}(t,\mathbf y)\\[6pt]
  \mathbf R_{\mathrm{pc}}(t,\mathbf y,\dot{\mathbf y}_{\mathrm{pc}})
  \end{pmatrix}
  =\mathbf 0 .
  \label{eq:global_dae_residual}
\end{equation}
Here $\mathbf R_{\mathrm{cell}}$ is the cell residual associated with the local weak form~\eqref{eq:hdg_local} (volume
flux, face flux via the trace, and source), $\mathbf R_{\mathrm{trace}}$
collects the interior, inflow, single-resistance, RCR, and junction transmission
residuals, and $\mathbf R_{\mathrm{pc}}$ is the capacitor
residual~\eqref{eq:rcr_capacitor}, which contains the time derivative
$C_i\,\dot P_{c,i}$. Thus, the time derivative enters only through the cell mass
matrix $M_K$ and the capacitor block; the trace rows are purely algebraic.

\textit{Differential / algebraic split.}
The rows carrying a time derivative are differential; the rest are algebraic:
\begin{equation}
  \text{differential:} \;
  \{\mathbf y_{\mathrm{cell}}\}\cup\{\mathbf y_{\mathrm{pc}}\},
  \label{eq:differential_components}
\end{equation}
\begin{equation}
  \text{algebraic:}  \;
  \{\mathbf y_{\mathrm{trace}}\}.
  \label{eq:algebraic_components}
\end{equation}
Under the non-singularity condition established in Section~\ref{sec:analysis},
the trace variables can be locally eliminated, so the residual
system~\eqref{eq:global_dae_residual} is a semi-explicit index-one DAE.

\textit{Newton system.}
Using the notation in \eqref{eq:newton_linearization}, the derivative block is
\begin{equation}
  \frac{\partial\mathbf F}{\partial\dot{\mathbf y}}
  =\mathrm{blockdiag}\bigl(M_K,\,\mathbf 0,\,\mathbf C_{\mathrm{wk}}\bigr),
  \qquad
  \mathbf C_{\mathrm{wk}}=\mathrm{diag}(C_i).
  \label{eq:dae_derivative_block}
\end{equation}
The Newton matrix has the form 
\begin{equation}
 \mathcal J_n
  =
  \begin{pmatrix}
   \gamma_n M_K + A & B & 0\\[4pt]
  C & D & E\\[4pt]
  0 & G & \gamma_n \,\mathbf C_{\mathrm{wk}}+\mathbf R_2^{-1}
  \end{pmatrix},
  \label{eq:ida_Jblocks}
\end{equation}
in which $A,B,C,D$ are the cell--cell, cell--trace, trace--cell and
trace--trace blocks already assembled for the network. The capacitor coupling reuses the boundary-section derivatives directly:

\begin{equation}
  E \;\text{contains}\; E_{\mathrm{rcr}}
  =\frac{\partial\widehat{\mathbf B}_h^{\mathrm{rcr}}}{\partial P_c}
  =\begin{bmatrix}-1\\[2pt]0\end{bmatrix} ,
  \label{eq:rcr_trace_derivative}
\end{equation}
\begin{equation}
  G \;\text{contains}\;
  \frac{\partial R_{\mathrm{pc},i}}{\partial(\widehat A_i,\widehat U_i)}
  =\bigl(-\widehat U_i,\;-\widehat A_i\bigr),
  \label{eq:rcr_capacitor_derivative}
\end{equation}
and the capacitor diagonal entry per outlet is
\begin{equation}
  \frac{\partial R_{\mathrm{pc},i}}{\partial P_{c,i}}
  +\gamma_n\,\frac{\partial R_{\mathrm{pc},i}}{\partial\dot P_{c,i}}
  =\frac{1}{R_{2,i}}+\gamma_n\,C_i .
  \label{eq:capacitor_newton_diagonal}
\end{equation}
The two mass terms $\gamma_n M_K$ and $\gamma_n C_i$, together with the remaining entries, come from the residual Jacobian assembly. The capacitance $C_i$ is distinct from the trace--cell block $C$.

This block structure is also the coupling interface for an external dynamical
model. If an externally coupled discretization introduces unknowns $z$ and
residuals $R_{\mathrm{ext}}(t, z, \dot{z}; y)$, the coupled problem is obtained
by augmenting $ \bm y$  and $\bm F$ with the variables $z$ and the interface
equations. The resulting off-diagonal blocks are assembled in the same Newton matrix \eqref{eq:ida_Jblocks}, providing a full monolithic formulation for the coupled problem.

\textit{Solver.}
The complete solution procedure is summarized in
Figure~\ref{fig:solver_workflow}. Each Newton iteration solves $\mathcal{J}_n \,\delta\mathbf y=-\mathbf G_n$
with the sparse direct solver \textsc{mumps} \cite{amestoy2000mumps}. Relative to the explicit-update formulation only the system size and
sparsity change: a few extra rows/columns for $y_{\mathrm{pc}}$ and the off-diagonal $E$, $G$ couplings.

\noindent
\begin{figure}[t]
\centering
\resizebox{\textwidth}{!}{
\includegraphics{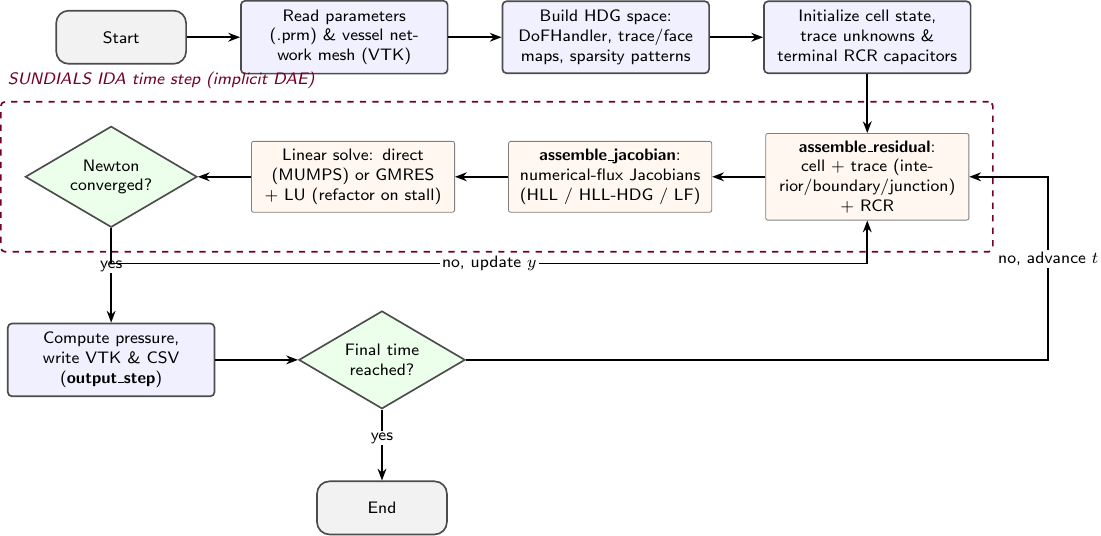}} \caption[Workflow of the implicit HDG
  solver]{Workflow of the implicit HDG solver. After the parameters and the
  vessel-network mesh are read, the HDG space is built and the cell, trace and
  terminal $\mathrm{RCR}$ capacitor unknowns are initialized. The nonlinear solve within each SUNDIALS IDA
  time step (dashed box) consists of residual assembly, Jacobian assembly,
  and a sparse linear solve, repeated until convergence. Jacobian assembly
  may be delayed, so the nonlinear solve does not necessarily use exact
  Newton iterations. Several heuristics are used to minimize computational
  cost, with new Jacobians assembled only when the available Jacobian does
  not produce a convergent sequence. At constant time intervals, the pressure
  is computed and the solution is written to file. The time loop continues
  until the final time is reached.}
\label{fig:solver_workflow}
\end{figure}

\section{Numerical Results}\label{sec:numerical_results} All computations and
scaling results reported in this section are obtained with the finite element
library \texttt{deal.II}~\cite{ArndtBangerthBlais-2026-a,ArndtBangerthDavydov-2021-a} (version 9.8.0), built with GCC
13.3.0, Open MPI 4.1.6 and SUNDIALS 6.4.1. The experiments are run on a
\ScalingMachineDescription.
\subsection{Convergence Studies}
As a first verification test, we consider a manufactured solution to assess
the spatial accuracy of the proposed HDG method. The manufactured test case
is inspired by Example~5.1 of \cite{lucca2023semi}. We consider the
one-dimensional blood-flow equations~\eqref{equation_system} on a single
vessel of length $L=1\,\mathrm{m}$ and final time $T=0.1\,\mathrm{s}$. The
exact manufactured solution is chosen as
\begin{align*}
    A(x,t) &= A_d + \alpha_A\,(1 - \cos(2 \pi t))\, \sin(2 \pi x/L), \\
    U(x,t) &= \alpha_U\, \sin(2 \pi t)\, \cos(2 \pi x/L),
\end{align*}
where $\alpha_A = 5\times 10^{-5}\,\mathrm{m}^2$ and
$\alpha_U = 0.5\,\mathrm{m/s}$. The corresponding source terms are obtained
by substituting these expressions into \eqref{equation_system}, so that the
manufactured solution satisfies the governing equations exactly. Unless
otherwise stated, all physical parameters are given in SI units. The
following parameters are used in the numerical experiments:

\emph{blood density} $\rho=1060$, \emph{viscosity} $\mu=0$,
\emph{wall thickness} $h_\mathrm{w}=1.2\times10^{-3}$,
\emph{Young's modulus} $E=4\times10^5$, \emph{reference pressure}
$P_0=0$, \emph{diastolic pressure} $P_d=9.46\times10^3$, and
\emph{diastolic cross-sectional area} $A_d=4.5239\times10^{-4}$. The errors
in the variables $A$ and $U$ are measured using the
$L^2$ norm and broken $H^1$ seminorm,
\begin{equation*}
\|e_w\|_{L^2}=\|w-w_h\|_{L^2},
\qquad
|e_w|_{H^1}=|w-w_h|_{H^1},
\qquad w\in\{A,U\}.
\end{equation*}
The corresponding errors and observed convergence rates for polynomial degrees $p=1$, $2$ and $p=3$ are reported in
Table~\ref{tab:convergence}. The expected order of convergence is achieved
for both quantities $A$ and $U$ in the $L^2$ norm and broken $H^1$ seminorm.
\begin{table}[ht]
\centering
\caption{Spatial convergence of the HDG scheme on a manufactured solution.
         Errors and convergence rates in the $L^2$ norm and broken $H^1$ seminorm for the cross-sectional
         area $A$ and velocity $U$, for polynomial degrees $p=1,2,3$.}
\label{tab:convergence}
\setlength{\tabcolsep}{5pt}
\renewcommand{\arraystretch}{1.05}
\begin{tabular}{c c rr rr rr rr}
\hline\hline
$p$ & $N_h$ &
$\|e_A\|_{L^2}$ & rate &
$|e_A|_{H^1}$   & rate &
$\|e_U\|_{L^2}$ & rate &
$|e_U|_{H^1}$   & rate \\
\hline
\multirow{5}{*}{1}
 & 4  & $2.197\times10^{-6}$ & --   & $3.316\times10^{-5}$ & --   & $1.926\times10^{-2}$ & --   & $5.766\times10^{-1}$ & --   \\
 & 8  & $5.714\times10^{-7}$ & 1.94 & $1.758\times10^{-5}$ & 0.92 & $5.021\times10^{-3}$ & 1.94 & $2.982\times10^{-1}$ & 0.95 \\
 & 16 & $1.441\times10^{-7}$ & 1.99 & $8.960\times10^{-6}$ & 0.97 & $1.286\times10^{-3}$ & 1.97 & $1.506\times10^{-1}$ & 0.99 \\
 & 32 & $3.609\times10^{-8}$ & 2.00 & $4.514\times10^{-6}$ & 0.99 & $3.256\times10^{-4}$ & 1.98 & $7.549\times10^{-2}$ & 1.00 \\
 & 64 & $9.030\times10^{-9}$ & 2.00 & $2.264\times10^{-6}$ & 1.00 & $8.193\times10^{-5}$ & 1.99 & $3.778\times10^{-2}$ & 1.00 \\[3pt]
\midrule
\multirow{5}{*}{2}
 & 4  & $2.729\times10^{-7}$ & --   & $8.754\times10^{-6}$ & --   & $2.554\times10^{-3}$ & --   & $1.295\times10^{-1}$ & --   \\
 & 8  & $3.502\times10^{-8}$ & 2.96 & $2.289\times10^{-6}$ & 1.94 & $3.328\times10^{-4}$ & 2.94 & $3.343\times10^{-2}$ & 1.95 \\
 & 16 & $4.409\times10^{-9}$ & 2.99 & $5.816\times10^{-7}$ & 1.98 & $4.238\times10^{-5}$ & 2.97 & $8.438\times10^{-3}$ & 1.99 \\
 & 32 & $5.523\times10^{-10}$ & 3.00 & $1.463\times10^{-7}$ & 1.99 & $5.343\times10^{-6}$ & 2.99 & $2.116\times10^{-3}$ & 2.00 \\
 & 64 & $6.908\times10^{-11}$ & 3.00 & $3.668\times10^{-8}$ & 2.00 & $6.707\times10^{-7}$ & 2.99 & $5.297\times10^{-4}$ & 2.00 \\[3pt]
 \midrule
\multirow{5}{*}{3}
 & 4  & $2.621\times10^{-8}$ & --   & $1.386\times10^{-6}$ & --   & $2.557\times10^{-4}$ & --   & $1.895\times10^{-2}$ & --   \\
 & 8  & $1.676\times10^{-9}$ & 3.97 & $1.796\times10^{-7}$ & 2.95 & $1.657\times10^{-5}$ & 3.95 & $2.438\times10^{-3}$ & 2.96 \\
 & 16 & $1.054\times10^{-10}$ & 3.99 & $2.274\times10^{-8}$ & 2.98 & $1.050\times10^{-6}$ & 3.98 & $3.075\times10^{-4}$ & 2.99 \\
 & 32 & $6.598\times10^{-12}$ & 4.00 & $2.855\times10^{-9}$ & 2.99 & $6.602\times10^{-8}$ & 3.99 & $3.855\times10^{-5}$ & 3.00 \\
 & 64 & $4.299\times10^{-13}$ & 3.94 & $3.574\times10^{-10}$ & 3.00 & $4.267\times10^{-9}$ & 3.95 & $4.825\times10^{-6}$ & 3.00 \\
\hline
\end{tabular}
\end{table}

\subsection{Verification with benchmark results}
\label{sec:verification}
In this section, we verify the proposed implicit HDG scheme against the
one-dimensional arterial blood-flow benchmark of Boileau et al.~\cite{boileau2015benchmark}. Of the test cases proposed there, we consider two: a single aortic bifurcation and a $37$-artery network for which in~vitro pressure and flow
measurements are available. These two cases are chosen because together they cover both types of boundary conditions used in this work: the aortic bifurcation is closed with full three-element ($\mathrm{RCR}$) Windkessel models, whereas the $37$-artery network is terminated by single resistances. We reproduce both cases with the HDG method, using the polynomial
degrees and mesh refinements specified in Table~1 of
\cite{boileau2015benchmark}. The initial area $A_0$ \cite{boileau2015benchmark} that depends on the diastolic area $A_d$ at $P= P_d$ is calculated using 
$$ A_0 = A_d \left( 1- \sqrt{A_d} \frac{P_d}{\beta}\right).$$

In both cases, the governing $1\mathrm{D}$
equations are closed with the tube law~\eqref{eq:tube_law}. All geometric, wall and terminal parameters, together with the prescribed inflow $Q_{\mathrm{in}}(t)$, are taken directly from the supplementary data of \cite{boileau2015benchmark}.

\subsubsection{Aortic bifurcation}
\label{sec:aortic_bifurcation}
The first test case is a symmetric, single-bifurcation model of the abdominal
aorta dividing into the two iliac arteries~\cite{boileau2015benchmark}. Each
iliac outlet is coupled to a three-element ($\mathrm{RCR}$) Windkessel model
representing the remainder of the systemic circulation. The outlet condition
is imposed through the terminal operator, while the aortic inlet is driven by
the \emph{in vivo} flow-rate signal $Q_{\mathrm{in}}(t)$ from the supplementary
data of the benchmark. In this case, we consider initial parameters as area $(A(x,0), U(x,0), P(x,0)) = (A_0, U_0, P_0)$ \cite[Table VII]{boileau2015benchmark}.

Figures~\ref{fig:aorta_results}--\ref{fig:iliac_results} compare the HDG waveforms of pressure, flow rate and luminal-radius change against the discontinuous Galerkin method (DCG) reference solutions at three locations: the midpoint of the aorta, the aortic bifurcation (junction point) and the midpoint of one iliac artery over the final periodic cycle, from $T = 28.6\,\mathrm{s}$ to $29.7\,\mathrm{s}$. At every location the HDG waveforms are visually indistinguishable from the
reference DCG results, reproducing both the systolic pressure peak and the reflected wave in diastole.
Following the benchmark configuration, we use polynomial degree $3$ with no
mesh refinement and employ $\mathrm{BDF}$ of maximum order $2$. Running to a
final time of $30\,\mathrm{s}$ our solver completes the
simulation in $8.741\,\mathrm{s}$ on a single processor, including the time spent in writing $6000$ output files.

\begin{figure*}[t]
    \centering

    \begin{subfigure}[b]{0.32\textwidth}
        \centering
        \includegraphics[width=\linewidth]{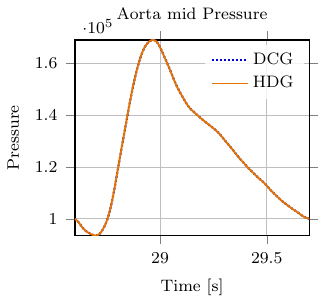}
        \caption{Pressure $(\mathrm{dyn/cm^2})$}
        \label{fig:aorta_pressure}
    \end{subfigure}
    \hfill
    \begin{subfigure}[b]{0.32\textwidth}
        \centering
        \includegraphics[width=\linewidth]{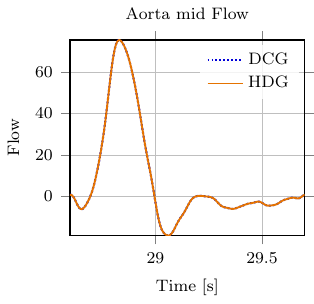}
        \caption{Flow rate $(\mathrm{ml/s})$}
        \label{fig:aorta_flow}
    \end{subfigure}
    \hfill
    \begin{subfigure}[b]{0.32\textwidth}
        \centering
        \includegraphics[width=\linewidth]{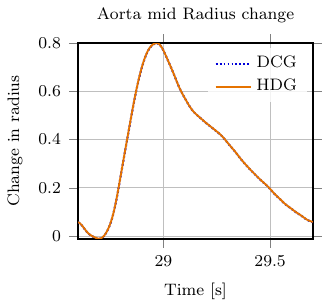}
        \caption{Radius change $(\mathrm{mm})$}
        \label{fig:aorta_radius}
    \end{subfigure}

    \caption{Comparison of the DCG and HDG solutions at the midpoint of the
    aorta. The pressure, flow rate, and radius change are shown as functions of
    time.}
    \label{fig:aorta_results}
\end{figure*}

\begin{figure*}[t]
    \centering

    \begin{subfigure}[b]{0.32\textwidth}
        \centering
        \includegraphics[width=\linewidth]{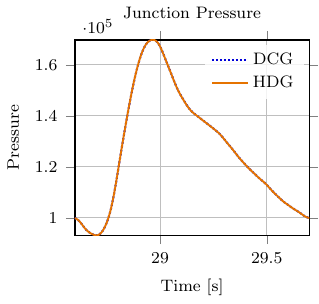}
        \caption{Pressure $(\mathrm{dyn/cm^2})$}
        \label{fig:junction_pressure}
    \end{subfigure}
    \hfill
    \begin{subfigure}[b]{0.32\textwidth}
        \centering
        \includegraphics[width=\linewidth]{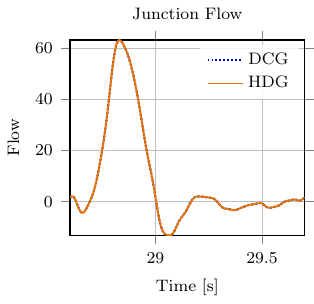}
        \caption{Flow rate $(\mathrm{ml/s})$}
        \label{fig:junction_flow}
    \end{subfigure}
    \hfill
    \begin{subfigure}[b]{0.32\textwidth}
        \centering
        \includegraphics[width=\linewidth]{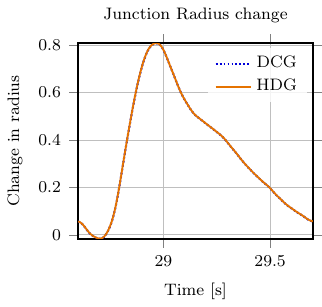}
        \caption{Radius change $(\mathrm{mm})$}
        \label{fig:junction_radius}
    \end{subfigure}

    \caption{Comparison of the DCG and HDG solutions at the junction. The
    pressure, flow rate, and radius change are shown as functions of time.}
    \label{fig:junction_results}
\end{figure*}

\begin{figure*}[t]
    \centering

    \begin{subfigure}[b]{0.32\textwidth}
        \centering
        \includegraphics[width=\linewidth]{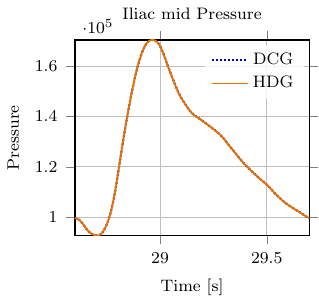}
        \caption{Pressure $(\mathrm{dyn/cm^2})$}
        \label{fig:iliac_pressure}
    \end{subfigure}
    \hfill
    \begin{subfigure}[b]{0.32\textwidth}
        \centering
        \includegraphics[width=\linewidth]{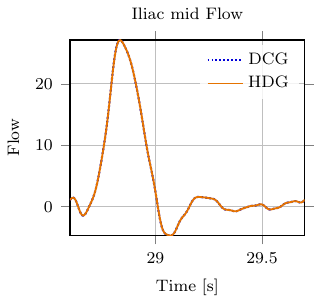}
        \caption{Flow rate $(\mathrm{ml/s})$}
        \label{fig:iliac_flow}
    \end{subfigure}
    \hfill
    \begin{subfigure}[b]{0.32\textwidth}
        \centering
        \includegraphics[width=\linewidth]{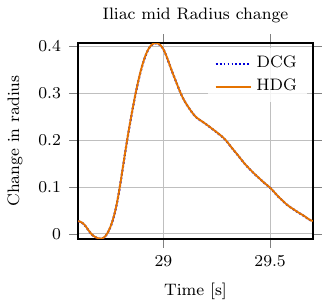}
        \caption{Radius change $(\mathrm{mm})$}
        \label{fig:iliac_radius}
    \end{subfigure}

    \caption{Comparison of the DCG and HDG solutions at the midpoint of the
    iliac artery. The pressure, flow rate, and radius change are shown as
    functions of time.}
    \label{fig:iliac_results}
\end{figure*}

\subsubsection{Thirty-seven-artery network}
\label{sec:37artery}

The second test is the $37$-artery network of the largest central systemic
arteries, realised in~vitro as a set of silicone tubes and used as a
$1\mathrm{D}$
benchmark in~\cite{boileau2015benchmark}. At the inlet of the ascending aorta
the flow rate measured in~vitro is prescribed as $Q_{\mathrm{in}}(t)$, and every
terminal vessel is closed with a single-resistance outlet, i.e. the $C\to0$,
$R_1\to0$ limit of the Windkessel operator of Section~\ref{sec:boundaries}, with
$R_2$ the terminal resistance of the benchmark. 

Following the benchmark configuration~\cite[Table~1]{boileau2015benchmark}, we
use polynomial degree $3$ and employ $\mathrm{BDF}$ of maximum order $3$.
Running to a final time of $20\,\mathrm{s}$ with $2000$ time steps, our solver completes the simulation in \(11.121\,\mathrm{s}\) using a single core of the workstation described above.

Figures~\ref{fig:37_aorta}--\ref{fig:37_gen3} compare the HDG pressure and flow
waveforms with the FVM reference solutions at the aortic root and at representative vessels of the first, second and third generations of bifurcations. The HDG scheme captures the pressure amplification and the progressive steepening of the wavefront toward the periphery, and reproduces the timing and magnitude of the reflected waves; the agreement with the in~vitro data is comparable to that of the FVM scheme, confirming that the HDG discretization of the coupled network, including the junction and terminal conditions, attains benchmark-level
accuracy.
\begin{figure}[!p]
  \centering
  \begin{subfigure}[b]{0.48\textwidth}
    \includegraphics[width=\linewidth]{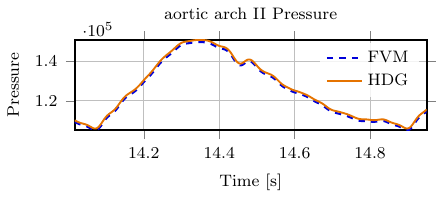}
    \caption{Aortic arch II -- pressure ($\mathrm{dyn/cm^2}$)}
  \end{subfigure}\hfill
  \begin{subfigure}[b]{0.48\textwidth}
    \includegraphics[width=\linewidth]{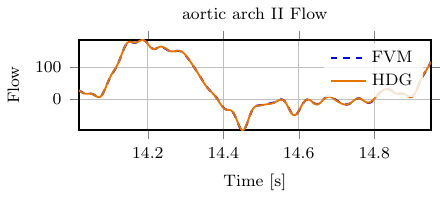}
    \caption{Aortic arch II -- flow ($\mathrm{ml/s}$)}
  \end{subfigure}
  \\[1ex]
  \begin{subfigure}[b]{0.48\textwidth}
    \includegraphics[width=\linewidth]{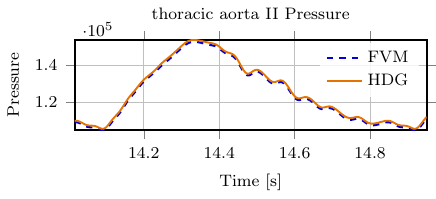}
    \caption{Thoracic aorta II -- pressure ($\mathrm{dyn/cm^2}$)}
  \end{subfigure}\hfill
  \begin{subfigure}[b]{0.48\textwidth}
    \includegraphics[width=\linewidth]{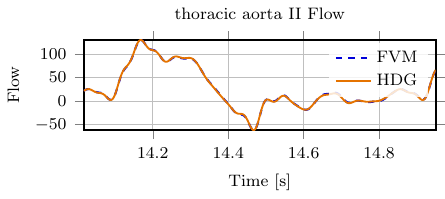}
    \caption{Thoracic aorta II -- flow ($\mathrm{ml/s}$)}
  \end{subfigure}
  \caption{Thirty-seven-artery network: pressure and flow waveforms at the
  midpoint of the aortic arch~II (top) and thoracic aorta~II (bottom). HDG
  (solid) compared with the finite-volume reference (FVM, dashed).}
  \label{fig:37_aorta}
\end{figure}
 
\begin{figure}[!p]
  \centering
  \begin{subfigure}[b]{0.48\textwidth}
    \includegraphics[width=\linewidth]{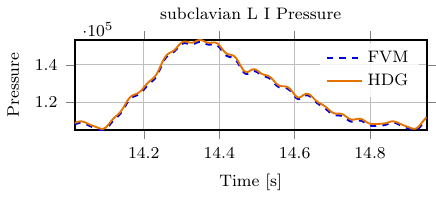}
    \caption{Left subclavian I -- pressure ($\mathrm{dyn/cm^2}$)}
  \end{subfigure}\hfill
  \begin{subfigure}[b]{0.48\textwidth}
    \includegraphics[width=\linewidth]{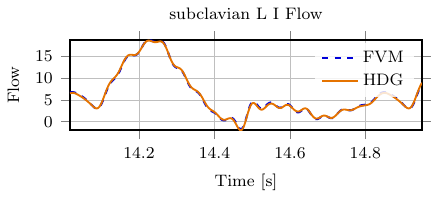}
    \caption{Left subclavian I -- flow ($\mathrm{ml/s}$)}
  \end{subfigure}
  \\[1ex]
  \begin{subfigure}[b]{0.48\textwidth}
    \includegraphics[width=\linewidth]{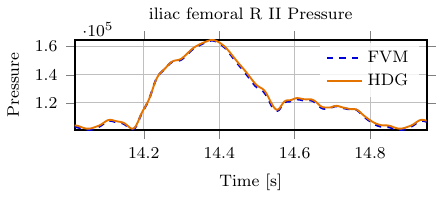}
    \caption{Right iliac-femoral II -- pressure ($\mathrm{dyn/cm^2}$)}
  \end{subfigure}\hfill
  \begin{subfigure}[b]{0.48\textwidth}
    \includegraphics[width=\linewidth]{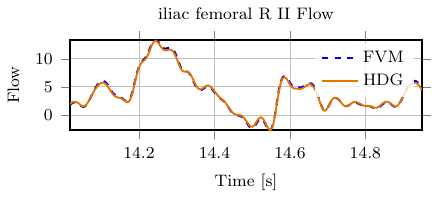}
    \caption{Right iliac-femoral II -- flow ($\mathrm{ml/s}$)}
  \end{subfigure}
  \caption{Thirty-seven-artery network, first generation of bifurcations:
  waveforms at the midpoint of the left subclavian~I (top) and right
  iliac-femoral~II (bottom). HDG (solid) versus FVM (dashed).}
  \label{fig:37_gen1}
\end{figure}
 
\begin{figure}[!p]
  \centering
  \begin{subfigure}[b]{0.48\textwidth}
    \includegraphics[width=\linewidth]{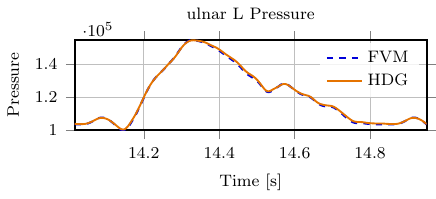}
    \caption{Left ulnar -- pressure ($\mathrm{dyn/cm^2}$)}
  \end{subfigure}\hfill
  \begin{subfigure}[b]{0.48\textwidth}
    \includegraphics[width=\linewidth]{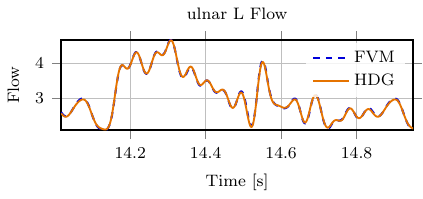}
    \caption{Left ulnar -- flow ($\mathrm{ml/s}$)}
  \end{subfigure}\\[1ex]
  \begin{subfigure}[b]{0.48\textwidth}
    \includegraphics[width=\linewidth]{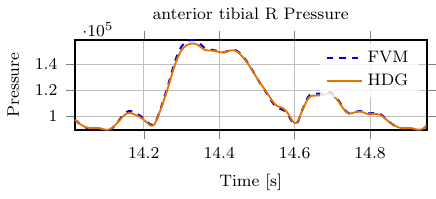}
    \caption{Right anterior tibial -- pressure ($\mathrm{dyn/cm^2}$)}
  \end{subfigure}\hfill
  \begin{subfigure}[b]{0.48\textwidth}
    \includegraphics[width=\linewidth]{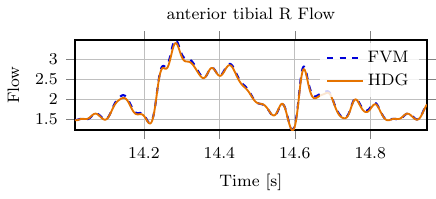}
    \caption{Right anterior tibial -- flow ($\mathrm{ml/s}$)}
  \end{subfigure}
  \caption{Thirty-seven-artery network, second generation of bifurcations: flow
  waveforms at the midpoint of the left ulnar (a) and right anterior tibial (b)
  arteries. HDG (solid) versus FVM (dashed).}
  \label{fig:37_gen2}
\end{figure}
 
\begin{figure}[!p]
  \centering
  \begin{subfigure}[b]{0.48\textwidth}
    \includegraphics[width=\linewidth]{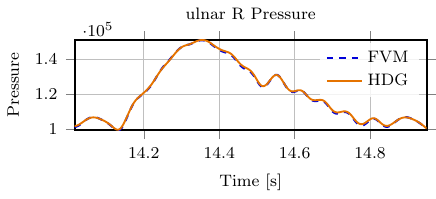}
    \caption{Right ulnar -- pressure ($\mathrm{dyn/cm^2}$)}
  \end{subfigure}\hfill
  \begin{subfigure}[b]{0.48\textwidth}
    \includegraphics[width=\linewidth]{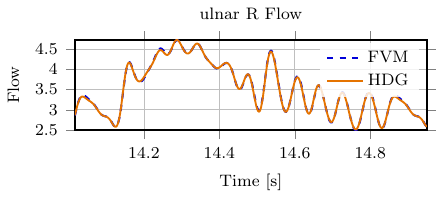}
    \caption{Right ulnar -- flow ($\mathrm{ml/s}$)}
  \end{subfigure}
  \\[1ex]
  \begin{subfigure}[b]{0.48\textwidth}
    \includegraphics[width=\linewidth]{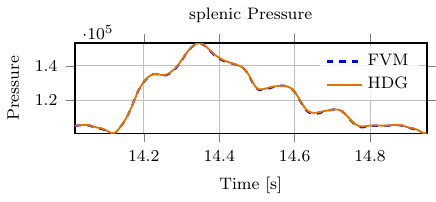}
    \caption{Splenic -- pressure ($\mathrm{dyn/cm^2}$)}
  \end{subfigure}\hfill
  \begin{subfigure}[b]{0.48\textwidth}
    \includegraphics[width=\linewidth]{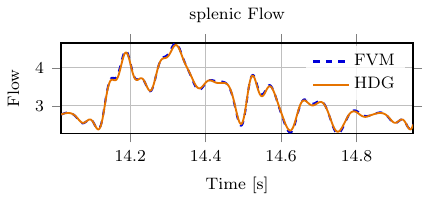}
    \caption{Splenic -- flow ($\mathrm{ml/s}$)}
  \end{subfigure}
  \caption{Thirty-seven-artery network, third generation of bifurcations:
  waveforms at the midpoint of the right ulnar (top) and splenic (bottom)
  arteries. HDG (solid) versus FVM (dashed).}
  \label{fig:37_gen3}
\end{figure}

\paragraph{Spatial mesh independence.}
To confirm that the reported waveforms are resolution independent, we repeat the $37$-artery simulation under global mesh refinement, from $2^{4}$ to $2^{7}$ cells per vessel, at fixed polynomial degree $p=2$ and maximum $\mathrm{BDF}$ order $2$. We monitor one representative artery from the aorta and from each generation of bifurcations: the thoracic aorta~II, the left subclavian~I (first generation), the right ulnar (second generation), and the splenic
(third generation). For each vessel, we track the systolic peak pressure, flow, and area across the refinement sequence (Figures~\ref{fig:37_aorta_mesh_ind}--\ref{fig:37_gen3_mesh_ind}). At every location, all three quantities remain within a $\pm1\%$ band of the finest-mesh value: the peak pressure and area are essentially converged already at the coarsest level, while the peak flow settles monotonically onto the finest-mesh value. All benchmark waveforms reported above are therefore computed on a mesh lying within this $\pm1\%$ band. The comparison with the reference schemes consequently measures the accuracy of the HDG scheme on a resolved discretization.
\begin{figure}[htbp]
  \centering
  \includegraphics[width=\linewidth]{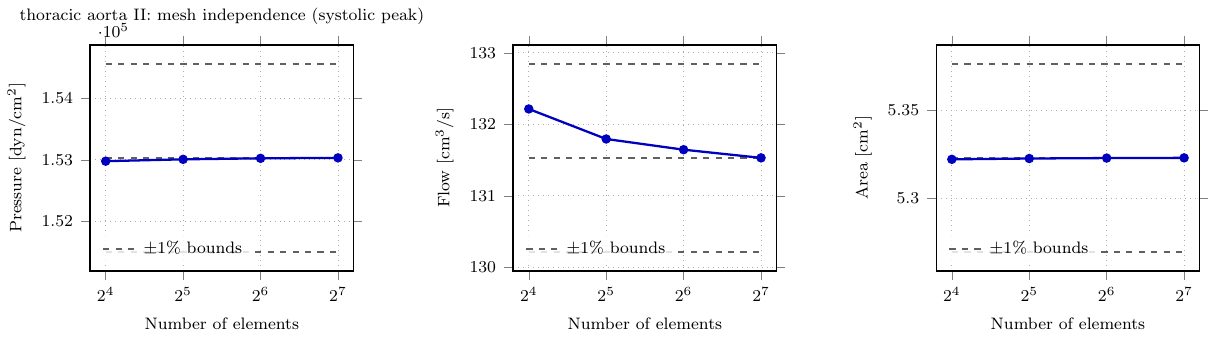}
  \caption{Mesh independence at the thoracic aorta~II (aorta): systolic peak
  pressure, flow and area under global refinement from $2^{4}$ to $2^{7}$
  elements per vessel ($p=2$). The dashed lines mark $\pm1\%$ of the
  finest-mesh value.}
  \label{fig:37_aorta_mesh_ind}
\end{figure}

\begin{figure}[htbp]
  \centering
  \includegraphics[width=\linewidth]{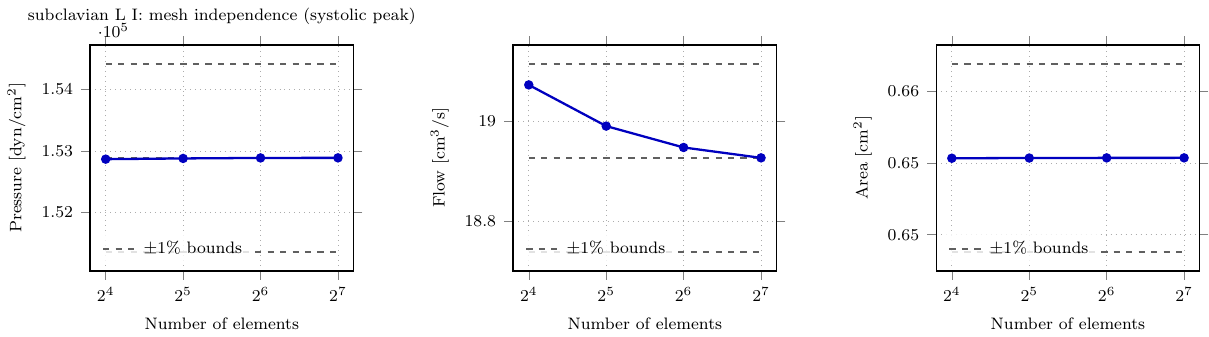}
  \caption{Mesh independence at the left subclavian~I (first generation).
  Details as in Figure~\ref{fig:37_aorta_mesh_ind}.}
  \label{fig:37_gen1_mesh_ind}
\end{figure}

\begin{figure}[htbp]
  \centering
  \includegraphics[width=\linewidth]{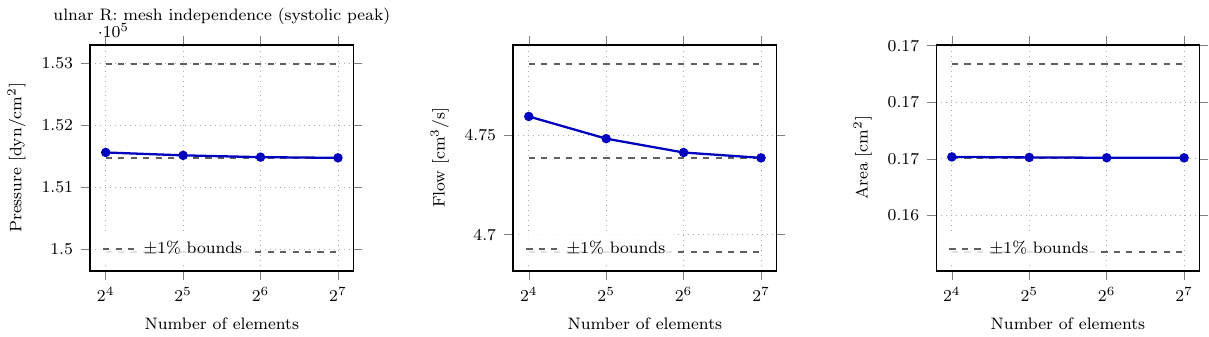}
  \caption{Mesh independence at the right ulnar (second generation).
  Details as in Figure~\ref{fig:37_aorta_mesh_ind}.}
  \label{fig:37_gen2_mesh_ind}
\end{figure}

\begin{figure}[htbp]
  \centering
  \includegraphics[width=\linewidth]{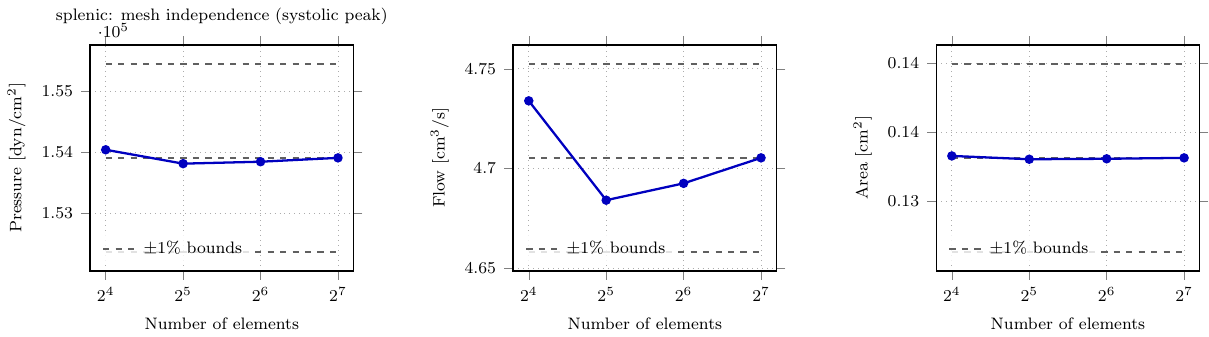}
  \caption{Mesh independence at the splenic artery (third generation).
  Details as in Figure~\ref{fig:37_aorta_mesh_ind}.}
  \label{fig:37_gen3_mesh_ind}
\end{figure}

\paragraph{Time resolution.}
We integrate the semi-discrete system using the implicit, variable-step, variable-order $\mathrm{BDF}$ methods described above. The time-resolution experiments use the same backend and residual formulation. To study the effect of the maximum $\mathrm{BDF}$ order on the computational cost, we fix the spatial polynomial degree
to $p=3$ and consider two levels of global mesh refinement. The
simulations are performed on a single processor with fixed absolute tolerance set to $10^{-7}$, and relative error tolerance set to $10^{-6}$. The resulting wall-clock times and time-integration statistics are reported in Figure~\ref{fig:time_resolution} and Table~\ref{tab:bdf_statistics}.

For both mesh resolutions, the smallest computational cost is obtained when the maximum $\mathrm{BDF}$ order is set to two. This behaviour is closely related to the number of accepted time steps: restricting the method to $\mathrm{BDF}1$ requires substantially more time steps, while allowing $\mathrm{BDF}2$
significantly reduces the number of steps needed to reach the final time. Increasing the maximum order beyond two does not result in a further reduction in computational cost; see Figure~\ref{fig:time_resolution}. In particular, for the higher mesh refinement, the number of accepted steps increases noticeably for the higher maximum orders, accompanied by an increase in the total number of nonlinear iterations; see Table~\ref{tab:bdf_statistics}.

The results also demonstrate that the maximum $\mathrm{BDF}$ order is only an upper bound for the adaptive time integrator. The selected order may vary during the simulation depending on the estimated local truncation error and the solution history. For example, when higher maximum orders are permitted, the dominant order need not coincide with the maximum allowed order, as indicated by the statistics in Table~\ref{tab:bdf_statistics}. Thus, the observed computational cost is determined by the combined adaptive selection of the time step and $\mathrm{BDF}$ order.

The stability properties of the $\mathrm{BDF}$ family provide useful context for
this behaviour. $\mathrm{BDF}$1 and $\mathrm{BDF}$2 are $A$-stable, whereas the higher-order
$\mathrm{BDF}$ methods are $A(\alpha)$-stable \cite{ascher1998computer}. The observed cost reflects the combined adaptive selection of step size and order. The experiments show that, for this problem and the chosen
error tolerances, allowing a maximum $\mathrm{BDF}$ order of two gives the lowest
observed computational cost.
\begin{figure}
  \centering
  \begin{subfigure}[b]{0.49\textwidth}
    \includegraphics[width=\linewidth]{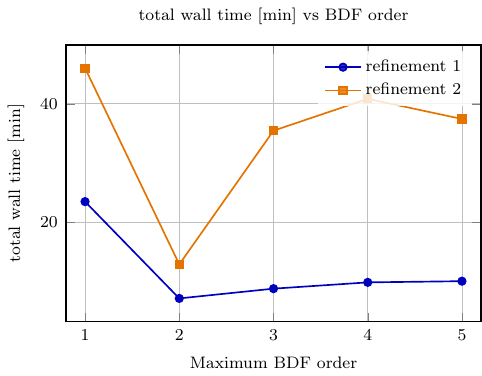}
    \caption{Total wall-clock time}
  \end{subfigure}\hfill
  \begin{subfigure}[b]{0.47\textwidth}
    \includegraphics[width=\linewidth]{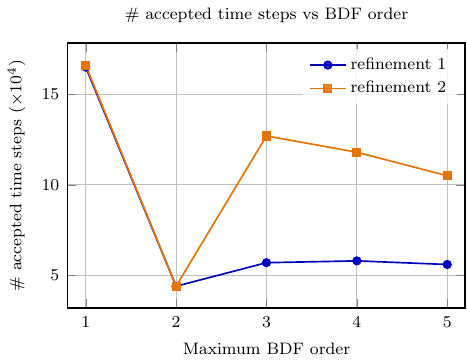}
    \caption{Number of accepted time steps}
  \end{subfigure}
  \caption{Cost of a single $20\,\mathrm{s}$ simulation of the $37$-vessel network on one processor, as a function of the maximum $\mathrm{BDF}$ order,
  with HDG degree $p = 3$. In every case the second-order scheme is the cheapest and first order the most expensive.}
  \label{fig:time_resolution}
\end{figure}

\begin{table}[ht]
\centering
\caption{BDF time-integration statistics for a single $20\,\mathrm{s}$
simulation of the $37$-vessel network on one processor, with HDG degree
$p = 3$, as a function of the maximum $\mathrm{BDF}$ order $q_{\max}$:
number of error-test failures $N_{\mathrm{fail}}$, number of nonlinear
iterations $N_{\mathrm{NI}}$, and the dominant order $q_{\mathrm{dom}}$
(the order at which the most steps were accepted), for one and two levels of
global mesh refinement.}
\label{tab:bdf_statistics}
\begin{tabular}{c|rrr|rrr}
\toprule
& \multicolumn{3}{c|}{One refinement}
& \multicolumn{3}{c}{Two refinements} \\
$q_{\max}$
& $N_{\mathrm{fail}}$ & $N_{\mathrm{NI}}$ & $q_{\mathrm{dom}}$
& $N_{\mathrm{fail}}$ & $N_{\mathrm{NI}}$ & $q_{\mathrm{dom}}$ \\
\midrule
1 & 2  & 174763 & 1 & 2  & 175862 & 1 \\
2 & 2  &  51561 & 2 & 2  &  47861 & 2 \\
3 & 19 &  64176 & 3 & 10 & 134431 & 3 \\
4 & 23 &  71892 & 4 & 12 & 150778 & 4 \\
5 & 35 &  71517 & 3 & 25 & 136992 & 3 \\
\bottomrule
\end{tabular}
\end{table}
\subsection{Study for Large Arterial Networks}
\subsubsection{7999 vessels}\label{sec:liver}
To verify the solver on a large-scale network with liver-like topology, we use
a synthetic hepatic arterial tree of $7{,}999$ vessels, whose topology and
branch geometry were generated with the open-source constrained constructive
optimisation (CCO) tool \textsc{OpenCCO} \cite{kerautret2023opencco}. The CCO
algorithm grows a space-filling binary tree by iteratively inserting terminal
segments that minimise a global cost functional (total intravascular volume).
Vessel radii are then assigned by Murray's law \cite{murray1926physiological}:
starting from the root radius $r_0$, both daughters of each bifurcation receive
$r_{\text{parent}}/2^{1/3}$, with a lower bound $r_0/\kappa$. All quantities
are given in CGS units. The vessel wall follows the
radius law~\eqref{eq:ls_wall}, and every terminal is closed with a single
resistor $R_2=\Delta P/Q_{\text{term}}$, where $\Delta P=60\,\mathrm{mmHg}$ is
the terminal pressure of the CCO construction and $Q_{\text{term}}$ is the
uniform terminal flow. The mean inflow $\bar Q_{\text{in}}$ is chosen such that
steady Poiseuille flow through the network gives the CCO perfusion pressure of
$100\,\mathrm{mmHg}$ at the root; the inlet is driven by a synthetic periodic
inflow of period $1\,\mathrm{s}$ with this mean. The parameters are listed in
Table~\ref{tab:params}, and the pressure field at peak inflow in the second
cycle, $t=1.213\,\mathrm{s}$, is shown in Figure~\ref{fig:8k_results}.

\begin{table}[]
\centering
\caption{Simulation parameters for the liver vascular tree (CGS units).}
\label{tab:params}
\begin{tabular}{@{}lll@{}}
\toprule
Symbol & Description & Value \\
\midrule
\multicolumn{3}{@{}l}{\textit{Network geometry}}\\
$N_\mathrm{v}$ & Vessels (cells) & 7999 \\
$N_\mathrm{t}$ & Terminals & 4000 \\
$N_\mathrm{j}$ & Junctions & 3999 \\
$g_\mathrm{max}$ & Max.\ generation depth & 57 \\
$r_0$ & Root radius & $0.174$ cm \\
$\kappa$ & Radius-floor ratio & 8 \\
$r_d$ & Radius range & $0.022$--$0.174$ cm \\
$\ell$ & Length range & $5.2\times10^{-3}$--$1.00$ cm \\
$A_d$ & Cross-section range & $1.49\times10^{-3}$--$9.55\times10^{-2}$ cm$^2$ \\
\midrule
\multicolumn{3}{@{}l}{\textit{Blood properties}}\\
$\rho$ & Density & $1.06$ g\,cm$^{-3}$ \\
$\mu$ & Dynamic viscosity & $0.036$ P \\
$\xi$ & Friction constant & $2$ \\
$m$ & Tube-law exponent & $0.5$ \\
\midrule
\multicolumn{3}{@{}l}{\textit{Vessel wall, Eq.~\eqref{eq:ls_wall}}}\\
$E$ & Young's modulus & $4.0\times10^{6}$ dyn\,cm$^{-2}$ \\
$k_1,\,k_2,\,k_3$ & Wall-law coefficients & $6.54\times10^{5}$ dyn\,cm$^{-2}$, $-4.62$ cm$^{-1}$, $2.63\times10^{5}$ dyn\,cm$^{-2}$ \\
$h_w/r_d$ & Relative wall thickness & $0.14$--$0.21$ \\
$\beta$ & $\tfrac{4}{3}\sqrt{\pi}Eh_w$ & $4.4\times10^{4}$--$2.3\times10^{5}$ dyn\,cm$^{-1}$ \\
$P_0,\,P_d$ & Reference / diastolic pressure & $0$ \\
\midrule
\multicolumn{3}{@{}l}{\textit{Inflow and terminal BC (single resistor)}}\\
$\bar Q_\mathrm{in}$ & Mean inflow & $1.04$ cm$^3$\,s$^{-1}$ \\
$\Delta P$ & Terminal pressure & $7.98\times10^{4}$ dyn\,cm$^{-2}$ (60 mmHg) \\
$R_2$ & Terminal resistance & $3.07\times10^{8}$ dyn\,s\,cm$^{-5}$ \\
$R_1,\,C,\,P_\mathrm{out}$ & (resistive limit) & $0$ \\
\midrule
\multicolumn{3}{@{}l}{\textit{Discretization \& solver}}\\
$p$ & DG degree & $2$ \\
-- & Global refinements & $1$ \\
-- & Numerical flux & HLL \\
-- & Max.\ BDF order & $5$ \\
atol,\,rtol & Tolerances & $10^{-7},\,10^{-4}$ \\
$T$ & Final time & $2$ s \\
\bottomrule
\end{tabular}
\end{table}

\begin{figure}[H]
  \centering
  \begin{subfigure}[c]{0.41\linewidth}
    \centering
    \includegraphics[width=\linewidth]{
 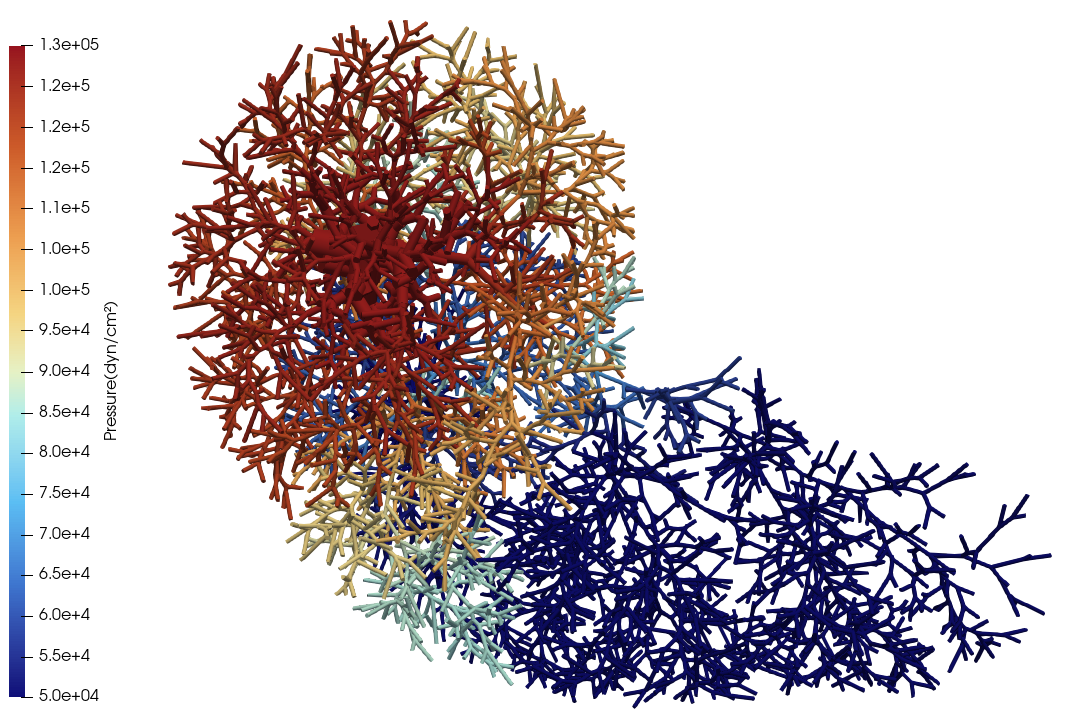}
    \caption{Pressure field at peak inflow, $t = 1.213\,\mathrm{s}$.}
    \label{fig:8k_results}
  \end{subfigure}\hfill
  \begin{subfigure}[c]{0.54\linewidth}
    \centering
    \includegraphics[width=\linewidth]{
      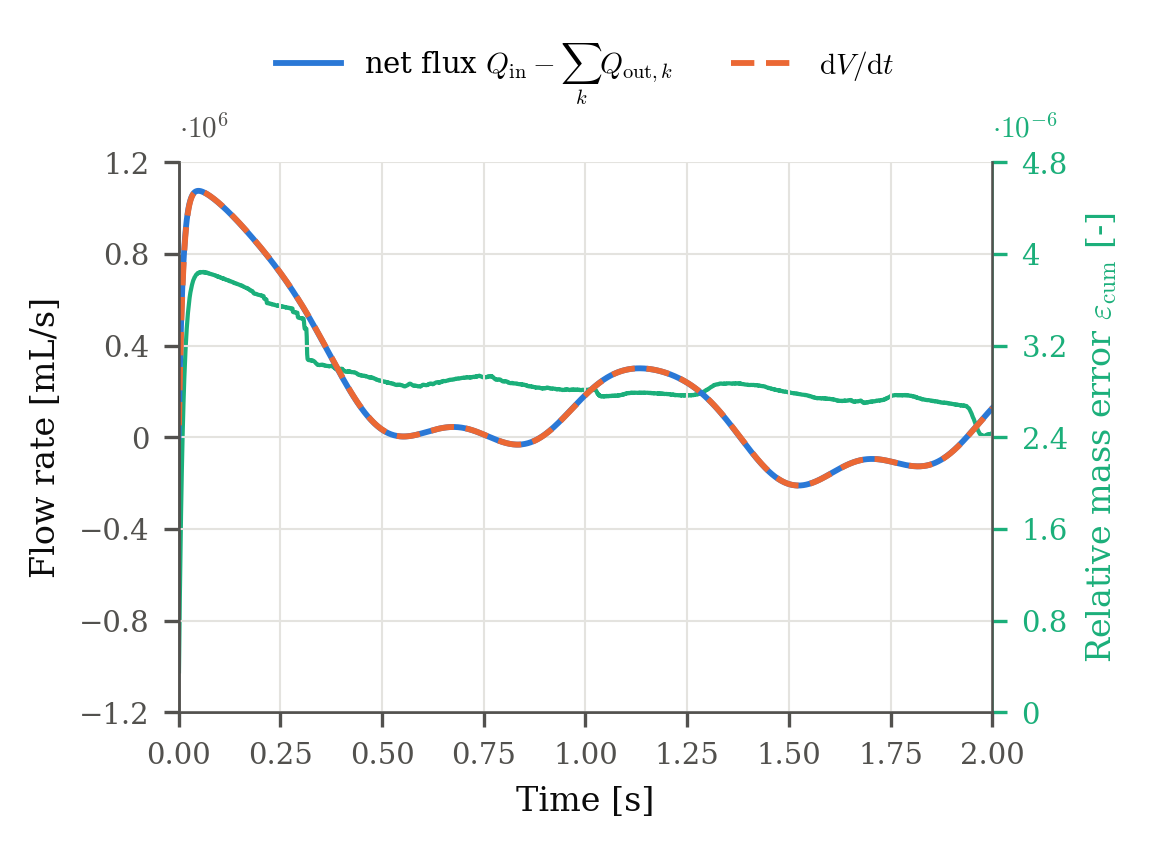}
    \caption{Global mass conservation.}
    \label{fig:8k_mass}
  \end{subfigure}
  \caption{Liver network with $7{,}999$ vessels, simulated up to
  $T = 2\,\mathrm{s}$ with polynomial degree $p = 2$, two global mesh
  refinements and BDF of maximum order five.
  (a)~Computed pressure field at peak inflow in the second cycle.
  (b)~Net boundary flux $Q_\mathrm{in} - \sum_k Q_{\mathrm{out},k}$ and the
  rate of change of the total network volume $\mathrm{d}V/\mathrm{d}t$
  (left axis), and the cumulative relative mass error
  $\varepsilon_\mathrm{cum}(t) = \bigl|V(t) - V(0) - \int_0^t
  (Q_\mathrm{in} - \sum_k Q_{\mathrm{out},k})\,\mathrm{d}s\bigr| / V(t)$
  (right axis). $\mathrm{d}V/\mathrm{d}t$ is computed by finite differences
  between consecutive outputs. The two flow curves coincide, and
  $\varepsilon_\mathrm{cum}$ stays of order $10^{-6}$.}
  \label{fig:8k}
\end{figure}
\subsubsection{Cerebral arteriolar network}
\label{sec:hc}
As a large, physiologically motivated application, we simulate the arteriolar
bed of the posterior parietal branch (PPB) of the middle cerebral artery (MCA)
from Blanco et al.~\cite{blanco2017blood}. The bed was generated with the
constrained constructive optimisation (CCO) technique and is a binary tree with
19,999 vessels, $9{,}999$ bifurcations and $10{,}000$ terminals. For every segment,
\cite{blanco2017blood} provides the mean area and mean pressure; we use them as
the reference state of the tube law, i.e.\ $A_d$ is the mean area and $P_d$ the
mean pressure of the segment, and $\beta$ is fitted to the pressure--area data
of the tree. The simulation starts from this mean state at zero velocity. As in the
reference study, the terminals are purely resistive, with
$P_\mathrm{out} = 10\,\mathrm{mmHg}$, and their resistances are set from the
mean terminal pressures and flows. Since the tree is driven there by an inlet
pressure, whereas our model prescribes the inflow, we convert the inlet
pressure waveform of~\cite{blanco2017blood} into an inflow $Q_\mathrm{in}(t)$
with a two-element Windkessel model of the tree, and scale it so that its mean
equals the total flow leaving the terminals,
$1.715\times10^{-7}\,\mathrm{m^3\,s^{-1}}$. Because the computation starts from
the mean state, the transient is short, and two cardiac cycles of
$1\,\mathrm{s}$ are sufficient.

Figure~\ref{fig:cer_map} shows the mean pressure over the second cycle, $t \in [1, 2]\,\mathrm{s}$, which decreases from the feeding segment towards the peripheral arterioles. Since individual arterioles cannot be resolved in the
figure, we average the mean pressure over all vessels of a given diameter, as in~\cite{blanco2017blood}: it is $58.4\,\mathrm{mmHg}$ for diameters
$190$--$210\,\mu\mathrm{m}$ and $51.0\,\mathrm{mmHg}$ for
$30$--$50\,\mu\mathrm{m}$, compared with $56.5$ and $50.7\,\mathrm{mmHg}$ reported there. This agreement confirms that the network and boundary data are set up
correctly. Figure~\ref{fig:cer_ida} summarises the adaptive BDF time integration performed by IDA. Over the two cycles, IDA takes $1813$ time steps with $2407$
Newton iterations ($1.33$ per step), $44$ Jacobian setups and $9$ error-test
failures, and no nonlinear convergence failures. About half of the steps are
spent in the start-up transient ($t \le 0.03\,\mathrm{s}$), where the step size
grows from $10^{-8}$ to $10^{-3}\,\mathrm{s}$ and the order rises from one to
five. Afterwards the step size settles at about $2\,\mathrm{ms}$, the order
varies between three and five, and both cardiac cycles require a comparable
number of steps.

\begin{figure}[H]
  \centering
  \begin{subfigure}[c]{0.41\linewidth}
    \centering
    \includegraphics[width=\linewidth]{
      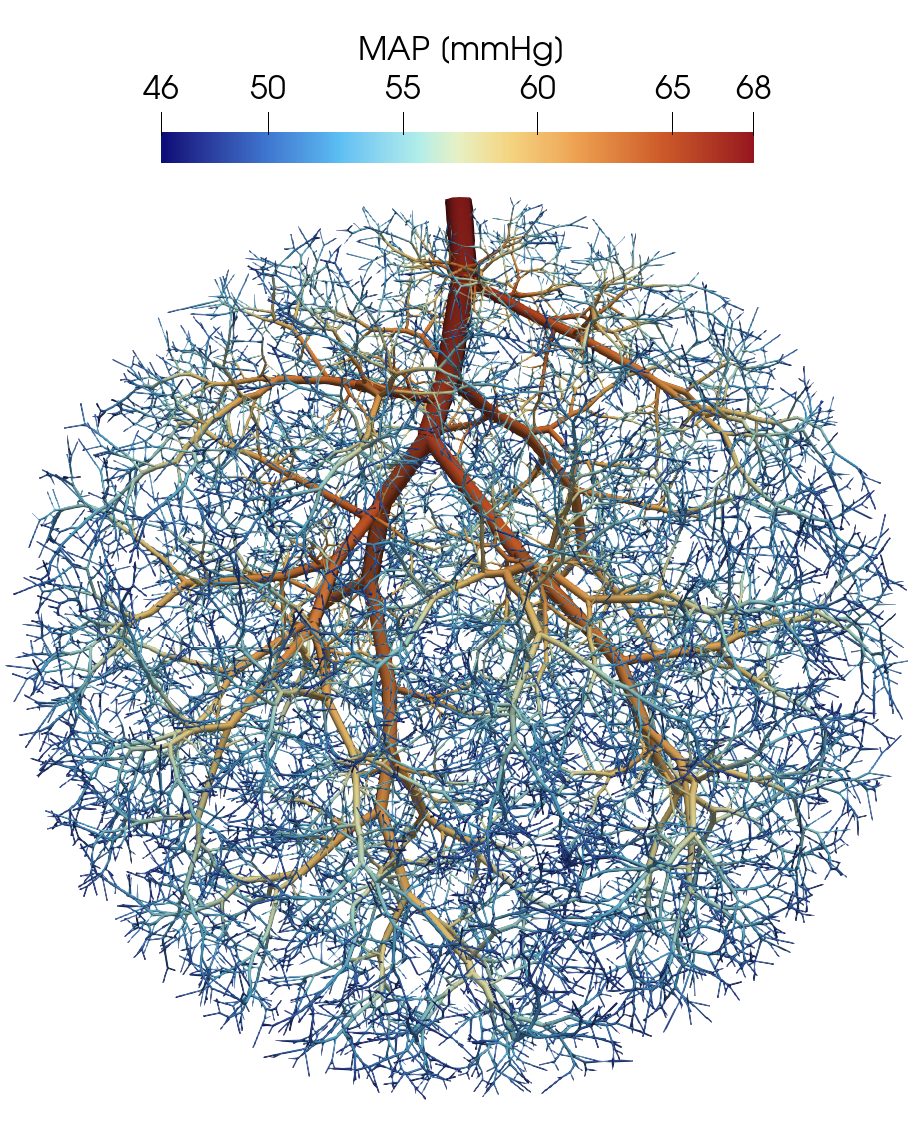}
    \caption{Mean arterial pressure (MAP) over $t \in [1, 2]\,\mathrm{s}$.}
    \label{fig:cer_map}
  \end{subfigure}\hfill
  \begin{subfigure}[c]{0.54\linewidth}
    \centering
    \includegraphics[width=\linewidth]{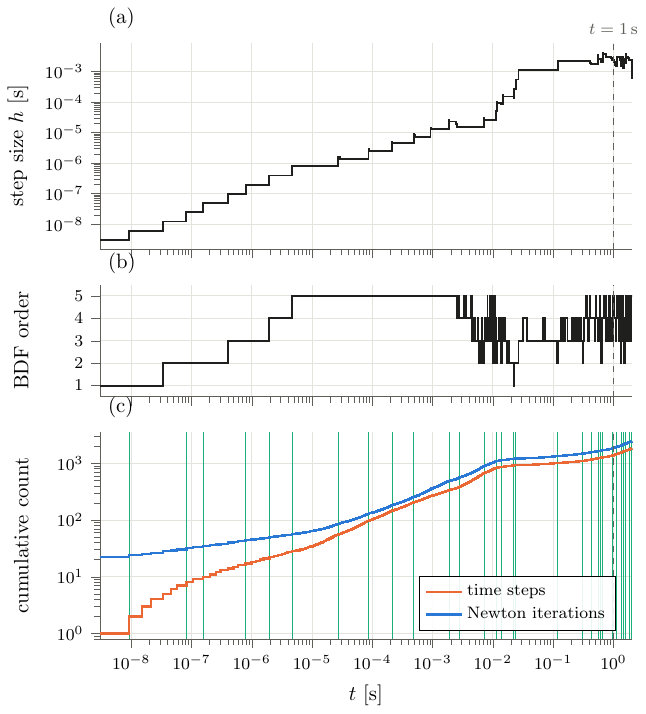}
    \caption{IDA statistics.}
    \label{fig:cer_ida}
  \end{subfigure}
  \caption{Arteriolar bed of the posterior parietal branch of the middle
  cerebral artery ($19{,}999$ vessels, normotensive geometry
  of~\cite{blanco2017blood}).
  (a)~Pressure averaged over the second cardiac cycle.
  (b)~Adaptive BDF time integration on a logarithmic time axis: step size,
  BDF order, and cumulative counts of time steps and Newton iterations.
  Thin green vertical lines mark steps where the Jacobian is updated. During the other steps the algorithm reuses the previously computed Jacobian matrix.
  The average is $1.33$ Newton iterations per step. The computation takes 1,813 time steps and 2,407 Newton iterations.}
  \label{fig:cerebral}
\end{figure}
\subsubsection{Synthetic large-scale networks: 50k and 100k vessels}
\label{sec:large_scale}
In this section, we go beyond the liver network described in
Section~\ref{sec:liver}. We generate two synthetic vascular trees of
$50{,}000$ and $100{,}000$ vessels using \textsc{svVascularize}
\cite{sexton2025svvascularize}, which constructs space-filling binary trees by CCO.
For each of the $N$ segments, the generator returns the segment endpoints, the length $L_i$, the reference
(diastolic) radius $r_{d,i}$ and the terminal (design) volumetric flow $Q_i$;
the simulations use the generated tree morphology, connectivity, lengths, radii and design flows directly. Both networks are run to a
final time $T = 1\,\mathrm{s}$ with the direct solver \textsc{mumps}. For the two 50k and 100k trees, we apply $p=2$ with $1$ cell per vessel. In each
case MPI process counts of $4$, $6$, $8$, $10$ and $12$ are tested.

\paragraph{Vessel wall and constitutive parameters.}
All quantities in this section are given in CGS units, the unit system in which
\textsc{svVascularize} generates the networks. The segments produced by
\textsc{svVascularize} have constant cross-section, so that
$r_{\text{in}}=r_{\text{out}}=r_{d}$ and the reference cross-section is constant
along each vessel,
\begin{equation}
  A_{d,i} \;=\; A_{0,i} \;=\; \pi\,r_{d,i}^{2}.
  \label{eq:ls_area}
\end{equation}
Although these networks are synthetic rather than hepatic, they are closed with
the same constitutive model as the liver network of Section~\ref{sec:liver},
so that both test cases differ only in geometry. The tube law is
\begin{equation}
  P(A,x)-P_{0} \;=\; P_{d} \;+\; \frac{\beta(x)}{A_{d}(x)}
      \Big(A^{m}-A_{d}(x)^{m}\Big),
  \qquad
  \beta(x)=\tfrac{4}{3}\sqrt{\pi}\,E\,h_{w}(x),
  \quad m=\tfrac12 ,
  \label{eq:ls_tubelaw}
\end{equation}
with external and diastolic reference pressures $P_{0}=P_{d}=0$. The wall
stiffness entering $\beta$ follows the empirical radius law of
\cite[Sec. 5.2.2]{MR5106706},
\begin{equation}
  \frac{E\,h_{w}(r_{d})}{r_{d}} \;=\; k_{1}\,e^{k_{2}\,r_{d}} + k_{3},
  \qquad\Longrightarrow\qquad
  h_{w}(r_{d}) \;=\; \frac{r_{d}}{E}\Big(k_{1}\,e^{k_{2}\,r_{d}}+k_{3}\Big),
  \label{eq:ls_wall}
\end{equation}
so that
$\beta_i=\tfrac{4}{3}\sqrt{\pi}\,r_{d,i}\big(k_{1}e^{k_{2}r_{d,i}}+k_{3}\big)$
depends on the local radius alone. The coefficients are
$k_{1}=6.5383\times10^{5}\,\mathrm{dyn\,cm^{-2}}$,
$k_{2}=-4.6154\,\mathrm{cm^{-1}}$ and
$k_{3}=2.6326\times10^{5}\,\mathrm{dyn\,cm^{-2}}$. Since only the product
$E h_w$ enters~\eqref{eq:ls_tubelaw}, the modulus $E$ affects the reported wall
thickness but not the dynamics; we use
$E=4.0\times10^{6}\,\mathrm{dyn\,cm^{-2}}$ ($0.4\,\mathrm{MPa}$), which gives
$h_w/r_d\approx0.18$--$0.23$. The resulting $\beta_i$ and $h_{w,i}$ are stored
per vessel, together with $A_{d,i}$, $L_{i}$ and $r_{d,i}$. Blood density and
viscosity are those used by \textsc{svVascularize} to build the trees,
$\rho=1.06\,\mathrm{g\,cm^{-3}}$ and $\mu=0.036\,\mathrm{P}$.

\textit{Terminal boundary parameters.}
Every terminal vessel is closed with the Windkessel outlet of
Section~\ref{sec:liver}, so that at each outlet the trace satisfies
$P(\widehat A)=R_{1}\widehat A\,\widehat U+P_{c}$ together with the capacitor
equation $C\dot P_{c}=\widehat A\,\widehat U-(P_{c}-P_{\text{out}})/R_{2}$. The
total peripheral resistance of each terminal is set from the design flow
$Q_i$ returned by the generator,
\begin{equation}
  R^{i}_{\text{tot}} \;=\; \frac{\Delta P}{Q_{i}},
  \label{eq:ls_Rtot}
\end{equation}
where $\Delta P=7.9993\times10^{4}\,\mathrm{dyn\,cm^{-2}}$ ($60\,\mathrm{mmHg}$)
is the terminal pressure prescribed in the tree construction. With the uniform
terminal flow $Q_i=6.25\times10^{-5}\,\mathrm{cm^3\,s^{-1}}$ of both networks,
this gives $R^{i}_{\text{tot}}=1.28\times10^{9}\,\mathrm{dyn\,s\,cm^{-5}}$ at
every outlet. In general the total resistance is split between the proximal
and distal resistors, and the compliance is fixed through an $RC$ time
constant $\tau$,
\begin{equation}
  R_{1}^{i}=\gamma\,R^{i}_{\text{tot}},\qquad
  R_{2}^{i}=(1-\gamma)\,R^{i}_{\text{tot}},\qquad
  C^{i}=\frac{\tau}{R^{i}_{\text{tot}}}.
  \label{eq:ls_rcr}
\end{equation}
The scaling runs reported below use the pure-resistance terminal, recovered as
the limit $R_{1}\to0$, $C\to0$ of~\eqref{eq:ls_rcr}, i.e.\ $R_{1}^{i}=0$,
$R_{2}^{i}=R^{i}_{\text{tot}}$, $C^{i}=0$ and $P_{\text{out}}=0$, which is the
single-resistance special case of the outlet operator in
Section~\ref{sec:liver}. The single arterial inlet ($\mathrm{bid}=0$) is driven
by a prescribed inflow $Q_{\text{in}}(t)$ whose mean equals the design root
flow of each tree ($1.5625$ and $3.125\,\mathrm{cm^3\,s^{-1}}$ for the 50k and
100k networks). The remaining nodes are tagged as terminals
($\mathrm{bid}=1,\dots,n_{\text{out}}$, one per outlet) or as interior
junctions, and only terminal nodes carry nonzero resistances. In summary, all
geometric and flow quantities $\{L_i,\,r_{d,i},\,A_{d,i},\,Q_i\}$ are taken
directly from \textsc{svVascularize}, while the material constants
$(E,k_{1},k_{2},k_{3})$, the terminal pressure $\Delta P$ and the reference
pressures $(P_{0},P_{d},P_{\text{out}})$ are the only modelling inputs; these
are collected in Table~\ref{tab:ls_params}.

\begin{table}[H]
  \centering
  \caption{Parameters of the synthetic large-scale networks (CGS units).
  Geometry and flow are produced by \textsc{svVascularize}; the derived areas,
  wall thickness~\eqref{eq:ls_wall} and terminal resistances
  \eqref{eq:ls_Rtot}--\eqref{eq:ls_rcr} follow from the constants listed.}
  \begin{tabular}{llll}
    \hline
    Symbol & Meaning & Value & Source \\
    \hline
    $L_i,\ r_{d,i},\ Q_i$ & length, radius, terminal flow & per vessel & \textsc{svVascularize} \\
    $A_{0,i}=A_{d,i}$ & reference area & $\pi r_{d,i}^2$ & Eq.~\eqref{eq:ls_area} \\
    $\rho$            & blood density                   & $1.06\,\mathrm{g\,cm^{-3}}$ & \textsc{svVascularize} \\
    $\mu$             & blood viscosity                 & $0.036\,\mathrm{P}$ & \textsc{svVascularize} \\
    $E$               & wall Young's modulus            & $4.0\times10^{6}\,\mathrm{dyn\,cm^{-2}}$ & assumed \\
    $k_1$             & wall-law coefficient            & $6.5383\times10^{5}\,\mathrm{dyn\,cm^{-2}}$ & assumed \\
    $k_2$             & wall-law coefficient            & $-4.6154\,\mathrm{cm^{-1}}$ & assumed \\
    $k_3$             & wall-law coefficient            & $2.6326\times10^{5}\,\mathrm{dyn\,cm^{-2}}$ & assumed \\
    $m$               & tube-law exponent               & $1/2$ & assumed \\
    $P_0,\ P_d$       & external, diastolic pressure    & $0$ & assumed \\
    $\Delta P$        & terminal pressure               & $7.9993\times10^{4}\,\mathrm{dyn\,cm^{-2}}$ (60 mmHg) & \textsc{svVascularize} \\
    $R^{i}_{\text{tot}}$ & terminal resistance          & $1.28\times10^{9}\,\mathrm{dyn\,s\,cm^{-5}}$ & Eq.~\eqref{eq:ls_Rtot} \\
    $P_{\text{out}}$  & outlet pressure                 & $0$ & assumed \\
    \hline
  \end{tabular}
  \label{tab:ls_params}
\end{table}
\paragraph{Flow solution.}
Figure~\ref{fig:50k_100k_pressure} shows the pressure distributions at
$t=0.213\,\mathrm{s}$ for the $50\mathrm{k}$ and $100\mathrm{k}$ networks. The networks are partitioned
over at most $12$ MPI subdomains. The pressure decreases monotonically from
the single arterial inlet toward the terminal outlets, qualitatively consistent with the smaller liver network of
Section~\ref{sec:liver} and confirming that the junction and terminal conditions of \eqref{eq:ls_Rtot}--\eqref{eq:ls_rcr} are enforced correctly at this scale. The spatial pressure gradient and the branching structure are well resolved in both cases.

\textit{Strong scaling.}
Figure~\ref{fig:all_scaling} reports wall times for the dominant kernels as a
function of MPI process count on a log--log scale, with an ideal-scaling line
included for reference. The residual and Jacobian assembly kernels account for
most of the computational work and scale nearly ideally up to $8$ processes
for both network sizes. The linear solver limits the overall strong scaling:
each direct solve is inexpensive at these problem sizes, leaving too little
work per process to offset the overhead of parallel factorization. Although the
individual solves are cheap, their repeated execution over many time steps
produces a cumulative cost that remains approximately constant as the process
count increases.

Beyond $8$ processes, the limited scalability of the direct solves leads to
diminishing returns from additional parallelism. Similar behaviour is observed
for the $8\mathrm{k}$ liver network. The assembly kernels remain the dominant
cost and continue to scale well, suggesting that communication associated with
the HDG skeleton coupling and terminal conditions remains manageable over the
network sizes considered. Larger systems may improve parallel efficiency in
the solve phase by increasing the work available per process.
\begin{figure}[H]
    \centering   
    \begin{subfigure}[t]{0.50\linewidth}
        \centering
        \includegraphics[width=\linewidth]{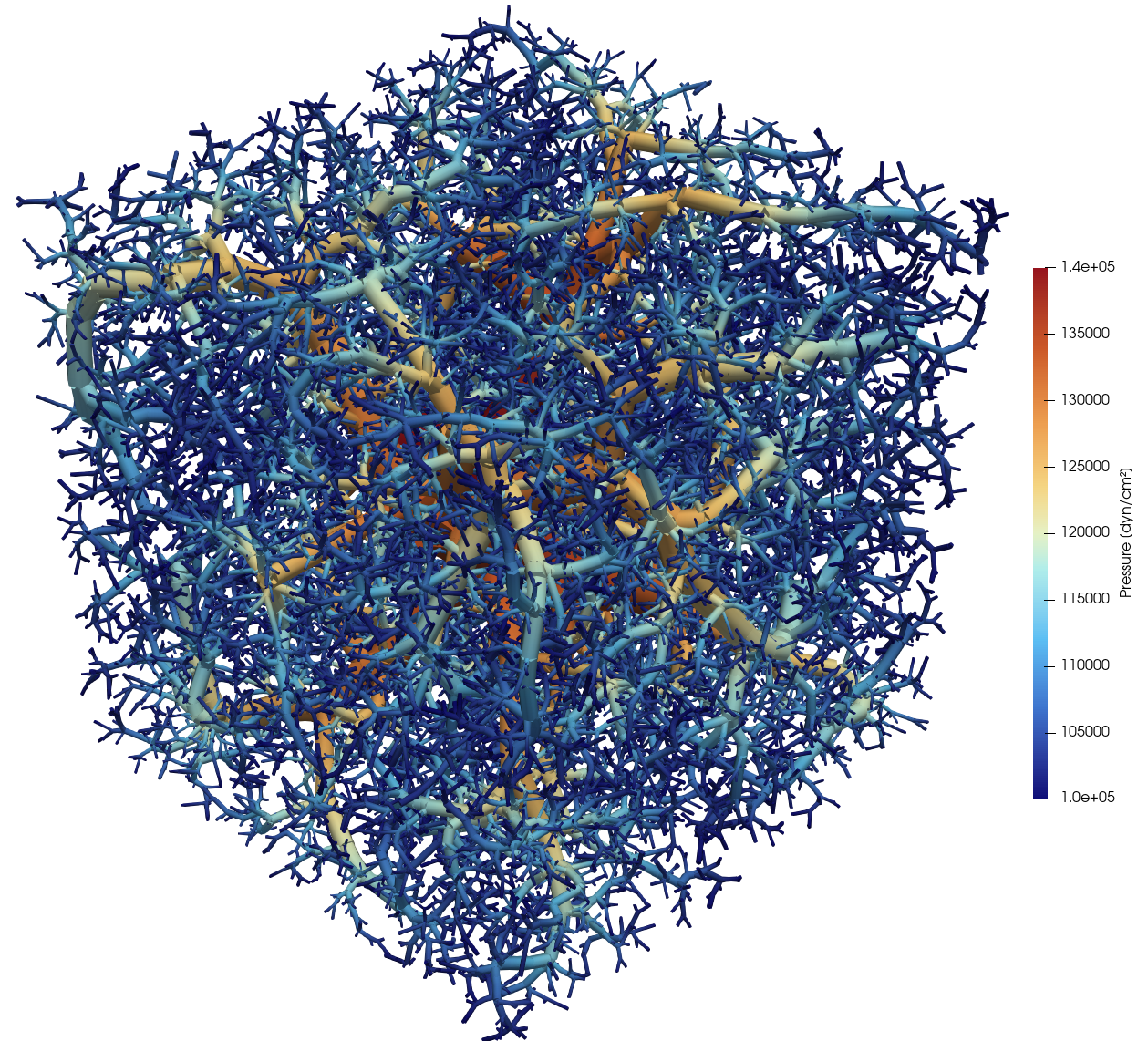}
        \caption{$50 \mathrm{k}$ vessel networks.}
        \label{fig:50k_network_pressure}
    \end{subfigure}
    \hfill
    \begin{subfigure}[t]{0.46\linewidth}
        \centering
        \includegraphics[width=\linewidth]{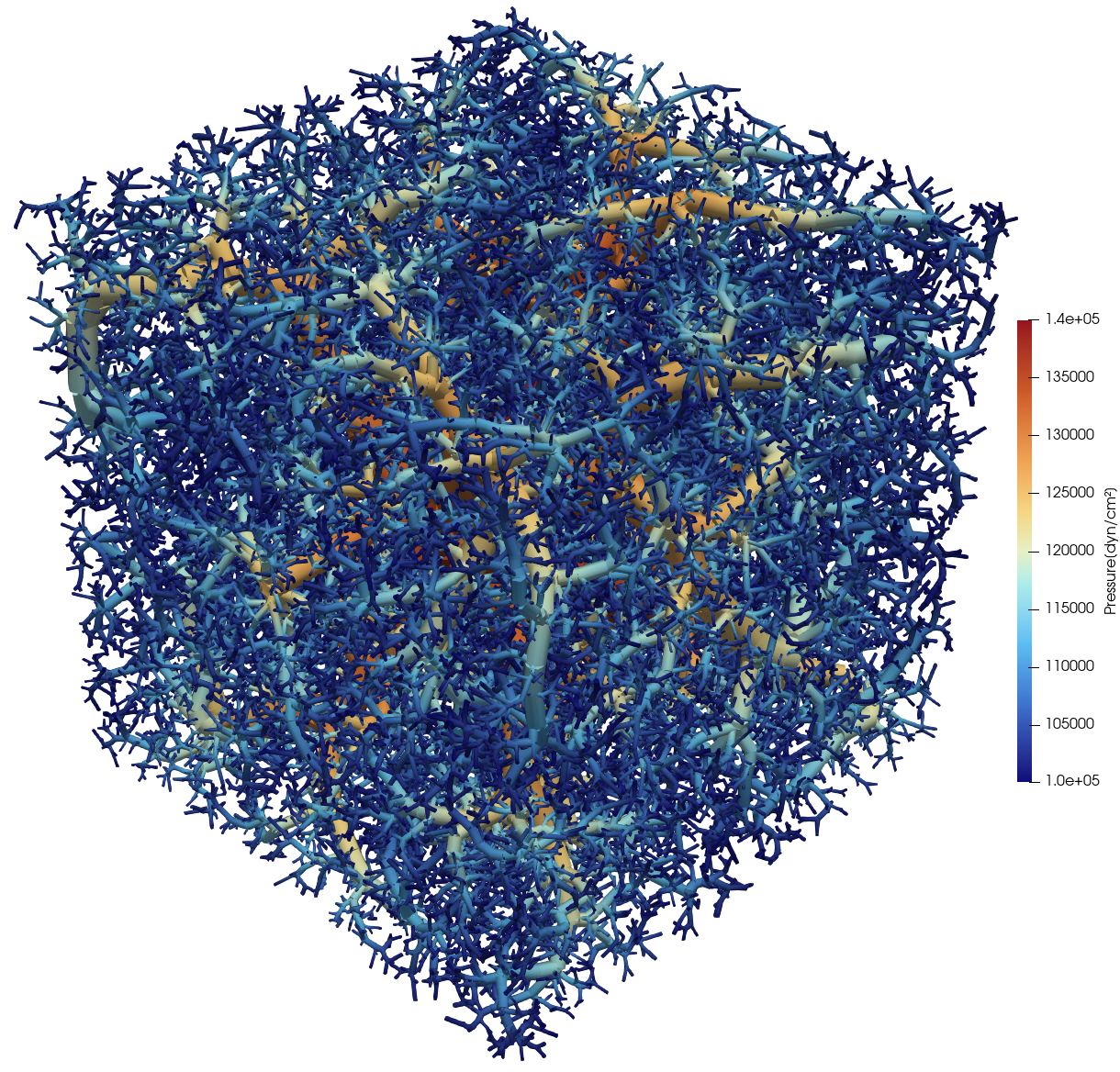}
        \caption{$100 \mathrm{k}$ vessel networks.}
        \label{fig:100k_network_pressure}
    \end{subfigure}
    \caption{Pressure distribution at $t=0.213\,\mathrm{s}$, when the prescribed inlet flow
$Q_{\text{in}}(t)$ reaches its maximum ($1.0$--$1.4\times10^{5}\,\mathrm{dyn\,cm^{-2}}$,
about $75$--$105\,\mathrm{mmHg}$).}
    \label{fig:50k_100k_pressure}
\end{figure}

\begin{figure}[H]
  \centering
  \begin{subfigure}[t]{0.48\linewidth}
    \centering
    \includegraphics[width=\linewidth]{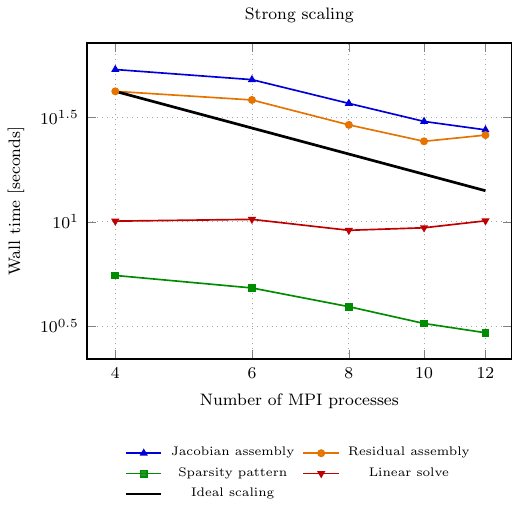}
    \caption{$50\mathrm{k}$ vessels, $500{,}010$ DoF.}
    \label{fig:50k_network_scaling}
  \end{subfigure}\hfill
  \begin{subfigure}[t]{0.48\linewidth}
    \centering
    \includegraphics[width=\linewidth]{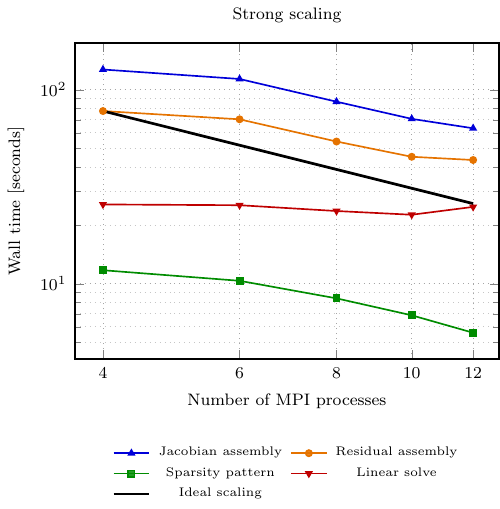}
    \caption{$100\mathrm{k}$ vessels, $1{,}000{,}010$ DoF.}
    \label{fig:100k_network_scaling}
  \end{subfigure}
  \caption{Strong-scaling results at the final time $T=1\,\mathrm{s}$. Each
  panel shows wall time for the dominant kernels versus the actual MPI process
  counts.}
  \label{fig:all_scaling}
\end{figure}

\section{Conclusion}
We have presented an HDG--HLL discretization for one-dimensional blood flow
on complex arterial networks with Riemann-coupled junctions and Windkessel
terminals. The hybrid traces place the network transmission conditions and the
terminal variables in a common global system, while the spatially discrete
problem has an explicit differential--algebraic structure: cell and capacitive
terminal variables are differential, whereas the network traces are
algebraic. This structure allows the complete flow network to be advanced through
one global residual and Jacobian structure at each time step. The analysis establishes local solvability and a
semi-discrete energy estimate under the stated positivity and subcriticality
assumptions. Manufactured-solution tests, the aortic bifurcation and the
$37$-artery benchmark verify the accuracy and physiological behavior of the method. The cerebral arteriolar experiment shows that the junction and terminal coupling remains robust on a large, deeply branching tree with thousands of resistive terminals, reproducing the mean pressure distribution of the reference study. The large-network experiments demonstrate its parallel scalability.

The DAE formulation is also the principal interface for future multiphysics
extensions. In particular, an external nonlinear elasticity model can be
introduced by augmenting the flow residual with elastic unknowns and interface
equations, thereby retaining a single monolithic Newton system. This is
consistent with geometrical multiscale models in which one-dimensional
pressure and area data drive a three-dimensional elastic problem
\cite{heltai2021multiscale,belponer2023reduced}. Future work will
address the two-way exchange between the flow tube law and the external
elastic response, as well as the corresponding fully coupled analysis and
preconditioning strategy.

\section*{Code and reproducibility}
The public \texttt{MetricFlowX} repository~\cite{metricflowx} contains the
source code, authoritative parameter files, dataset download script and
experiment scripts. Its container launcher supports Docker, Podman and
Apptainer and runs the compiled application without a local installation of
\texttt{deal.II} or the solver.

For example, the aortic-bifurcation case can be prepared and run from the
repository root as follows:
\begin{small}
\begin{verbatim}
./scripts/download_datasets.sh datasets
python3 scripts/reproduce_paper.py bifurcation \
  --runtime docker \
  --image ghcr.io/luca-heltai/metric-flow-x:main \
  --dataset-root datasets --output-root runs
\end{verbatim}
\end{small}
Replacing \texttt{docker} with \texttt{podman} or \texttt{apptainer} selects
the corresponding installed container runtime. Inputs and outputs are shared
with the host working directory; the prepared parameter file and run metadata
are written under \texttt{runs/bifurcation/}. The same experiment script
provides the remaining cases and their parameter variations. For each rerun,
the code revision, dataset checksums and container image digest should be
recorded together with the numerical output.

\section*{Acknowledgments}
RD and LH acknowledge support of the European Research Council (ERC) under the European Union's Horizon~2020 research and innovation programme (call HORIZON-EUROHPC-JU-2023-COE-03, grant agreement No.~101172493 ``dealii-X'').
LH acknowledges partial support by King Abdullah University of Science and Technology Research Funding (KRF) under Award No.~ORFS-2025-CRG13-6911.3. We thank Marco Feder (University of Pisa) and Davide Polverino (University of Pisa) for helpful discussions.

\section*{Use of AI tools in the writing process}
During the preparation of this work, the authors used ChatGPT v6.0 in order to improve
readability and language. The authors reviewed and edited the content as needed and
take full responsibility for the content of the published article.

\bibliographystyle{elsarticle-num}
\bibliography{./blood_flow}
\clearpage

\end{document}